\documentclass[11pt, letterpaper]{amsart}
\usepackage[english]{babel}
\usepackage{amssymb,amsmath,amsthm,mathrsfs,bm}
\usepackage{color}
\usepackage[margin=1in]{geometry}
\usepackage{stmaryrd}

\newtheorem{theorem}[subsection]{Theorem}

\newtheorem{definition}[subsection]{Definition}
\newtheorem{lemma}[subsection]{Lemma}
\newtheorem{remark}[subsection]{Remark}
\newtheorem{proposition}[subsection]{Proposition}
\newtheorem{corollary}[subsection]{Corollary}

\newtheorem*{claim*}{Claim}
\newtheorem*{theorem*}{Theorem}
\def\bal{\begin{aligned}}
\def\eal{\end{aligned}}
\def\be{\begin{equation}}
\def\ee{\end{equation}}
\def\bcs{\begin{cases}}
\def\ecs{\end{cases}}
\def\={\;=\;}
\def\+{\,+\,}
\def\-{\,-\,}

\def\Z{{\mathbb Z}}

\def\Q{{\mathbb Q}}
\def\R{{\mathbb R}}

\def\sA{\mathcal{A}} 

\def\sO{\mathcal{O}} 

\def\sG{\mathcal{G}}

\def\supp{{\rm supp}}

\def\lb{\llbracket}
\def\rb{\rrbracket}
\def\pow#1{\llbracket #1\rrbracket}
\def\ord{\mathrm{ord}}

\def\cartier{\mathscr{C}_p}
\def\fil{\mathscr{F}}

\def\v#1{{\bf #1}}

\def\is{\equiv}
\def\mod#1{({\rm mod}\ #1)}
\def\hat{\widehat}



\definecolor{lightgrey}{rgb}{0.8, 0.84, 0.8}

\parindent=0pt
\parskip=10pt 

\title{Frobenius structure and $p$-adic zeta values
}
\author{Frits Beukers, Masha Vlasenko}

\address{Utrecht University}
\email{f.beukers@uu.nl}

\address{Kyiv School of Economics}
\email{maria.vlasenko@kse.org.ua}

\thanks{
Work of Masha Vlasenko was supported by the National Science Centre of 
Poland (NCN), grant UMO-2020/39/B/ST1/00940.}

\date{\today}
\begin{document}

\begin{abstract}
For differential operators of Calabi-Yau type, Candelas, De la Ossa and van Straten conjecture
the appearance of $p$-adic zeta values in the matrix entries of their $p$-adic Frobenius structure expressed in the standard basis of solutions near a point of maximal unipotent local monodromy. We prove that this phenomenon holds for simplicial and hyperoctahedral families of Calabi-Yau hypersurfaces in $n$ dimensions, in which case the limits of the Frobenius matrix entries are rational linear combinations of products of $\zeta_p(k)$ with $1< k < n$.
\medskip

\noindent Keywords: Picard-Fuchs equation, Frobenius structure, p-adic zeta function.
\end{abstract}

\maketitle

\section{Introduction and main results}
Let $L \in \Q(t)[\frac{d}{dt}]$ be a linear differential operator of order $n$ 
and let $p$ be a prime number.  A $p$-adic Frobenius structure of $L$ is given by a
differential operator with coefficients in $p$-adic analytic elements $\sA\in E_p[\frac{d}{dt}]$ such that for any solution $y(t)$ of $L(y) = 0$ the function $\sA(y(t^p))$ is again a solution of $L$. Here the field of $p$-adic analytic elements $E_p$ is the completion  of $\Q(t)$ with respect to the $p$-adic Gauss norm $\left| \frac{\sum_i a_i t^i}{\sum_j b_j t^j}\right| = \frac{\max_{i}(|a_i|_p)}{\max_j(|b_j|_p)}$.
The concept of Frobenius structure was introduced by Bernard Dwork. The definition we use in this paper is a variation on it, see \cite[Remark 1.3]{IN}. Existence of a $p$-adic Frobenius structure is a strong property, we only expect it for differential operators $L$ which arise from the Gauss--Manin connection in algebraic geometry. 
Section~\cite[\S17]{Kedlaya} of Kedlaya's book is a useful modern reference on Frobenius structures on differential modules.  

We will consider differential operators whose local monodromy around $t=0$ is maximally unipotent. Denote $\theta = t \frac{d}{dt}$ and assume that 
\[
L = \theta^n + a_1 \theta^{n-1} + \ldots + a_{n-1} \theta + a_n 
\]
with $a_j(t)\in\Q(t)$ and $a_j(0)=0$ for $j=1,\ldots,n$. 
We call such operators \emph{MUM-type operators}.
They have a unique basis of solutions of the form 
\[
y_i = F_0 \frac{\log^i t}{i!} + F_1 \frac{\log^{i-1} t}{(i-1)!} + \ldots + F_{i-1} \log t + F_i, \quad i=0,\ldots, n-1, 
\]
where $F_i \in \Q\lb t \rb$, $F_0(0)=1$ and $F_i(0)=0$ for $i>0$. We call it  \emph{the standard basis}.

\begin{definition} Let $R\subset\Z_p\lb t\rb$ be a $p$-adically complete subring. We say that $L$ has a $p$-adic Frobenius structure over $R$ if there exists a
differential operator 
$\sA=\sum_{j=0}^{n-1} A_j(t) \theta^j \in R[\theta]$ with $A_0(0)=1$ such that
for every $i=0,1,\ldots,n-1$ the composition $\sA(y_i(t^p))$ is again a solution of this differential equation.
\end{definition}

\begin{remark}\label{uniqueness-remark}
In principle it would be possible to consider more general Frobenius lifts $t^\sigma$ such that
$t^\sigma\is t^p\mod{p}$. For example, this is done in \cite{IN}, where we have the ring $R=\Z_p\lb t\rb$
and the so-called excellent Frobenius lift. In the present paper we restrict ourselves to $t^\sigma=t^p$ in order to simplify the exposition.

If $L$ is irreducible in $\Q(t)[\theta]$ and has a Frobenius structure over a ring  $R\subset E_p$ consisting of $p$-adic analytic elements, then the operator $\sA$ is uniquely determined. See \cite{Dwork89}. 

When $R=\Z_p\lb t\rb$ this uniqueness is not true anymore. We will give an example in Section~\ref{sec:Frobenius-structure}.
\end{remark}

The following proposition gives a more explicit description of the Frobenius action.

\begin{proposition}\label{special-Frobenius-action}
Suppose a MUM-type operator $L$ has a Frobenius structure with corresponding operator
$\sA$. Then there exist $\alpha_0,\alpha_1, \ldots,\alpha_{n-1}\in\Z_p$ such that
\be\label{Frob-str-equation}
\sA(y_i(t^p)) = p^i \sum_{j=0}^i \alpha_j \, y_{i-j}(t), \quad i=0,\ldots, n-1.
\ee 
Moreover, $\alpha_j=A_j(0)$ for $j=0,\ldots,n-1$. In particular $\alpha_0=A_0(0)=1$.
\end{proposition}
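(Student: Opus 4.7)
The plan is to exploit the identity $\log(t^p) = p\log t$ to expand $y_i(t^p)$ in powers of $\log t$, and then match those powers against the decomposition of $\sA(y_i(t^p))$ in the basis $y_0,\ldots,y_{n-1}$.

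First I would write
\[
y_i(t^p) \= \sum_{k=0}^{i} p^{k}\, F_{i-k}(t^p)\, \frac{\log^{k} t}{k!},
\]
so the maximal power of $\log t$ appearing in $y_i(t^p)$ is $\log^{i} t$. Setting $w := \log t$, I would observe that $\theta = t\tfrac{d}{dt}$ acts on power series in $t$ with polynomial coefficients in $w$ as a sum $\theta = \theta_t + \partial_w$ of two \emph{commuting} derivations, where $\theta_t g(t) = t g'(t)$ acts trivially on $w$. Consequently every $\theta^{j}$, and hence $\sA$, can only preserve or lower the degree in $w$. Since $\sA(y_i(t^p))$ is by hypothesis a solution of $L$ of $w$-degree at most $i$, it must be a linear combination of $y_0,\ldots,y_i$ only:
\[
\sA(y_i(t^p)) \= \sum_{j=0}^{i} c_{ij}\, y_{j}(t).
\]
This already yields the triangular shape of \eqref{Frob-str-equation} and leaves me to compute the scalars $c_{ij}$.

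To pin down $c_{ij}$, for each $s = 0,1,\ldots,i$ I would compare the coefficients of $\log^{i-s} t/(i-s)!$ on the two sides, using the binomial expansion
\[
\theta^{l}\bigl(g(t)\,w^{k}/k!\bigr) \= \sum_{m=0}^{l}\binom{l}{m}\bigl(\theta_t^{l-m} g\bigr)\,w^{k-m}/(k-m)!.
\]
After collecting terms, the coefficient of $\log^{i-s} t/(i-s)!$ in $\sA(y_i(t^p))$ becomes
\[
\sum_{r=0}^{s} p^{i-s+r}\sum_{l} A_l(t)\binom{l}{r}\theta_t^{l-r} F_{s-r}(t^p),
\]
while on the right-hand side it equals $\sum_{r=0}^{s} c_{i,i-r}\, F_{s-r}(t)$. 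The key step is then to evaluate both sides at $t=0$: since $F_0(0) = 1$, $F_m(0) = 0$ for $m>0$, and $\theta_t^{l}$ applied to any power series with vanishing constant term again vanishes at $t=0$ whenever $l \ge 1$, the only surviving contributions come from the single term $r=s=l$ on the left (yielding $p^{i} A_s(0)$) and from $r=s$ on the right (yielding $c_{i,i-s}$). Therefore $c_{i,i-s} = p^{i} A_s(0)$, and setting $\alpha_s := A_s(0)$ gives \eqref{Frob-str-equation}; the fact that $\alpha_0 = A_0(0) = 1$ is built into the definition of a Frobenius structure.

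The argument is essentially a careful bookkeeping exercise and I do not expect any serious obstacle; the main point of care is tracking the two commuting actions $\theta_t$ and $\partial_w$ correctly so that the matching of $\log$-coefficients comes out right.
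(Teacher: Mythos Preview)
Your argument is correct and complete. The paper itself does not give a proof of this proposition but simply refers to \cite[\S2]{IN}; your write-up supplies exactly the kind of direct verification that reference contains, namely expanding in powers of $\log t$, using the commuting decomposition $\theta=\theta_t+\partial_w$, and evaluating at $t=0$ to extract the constants $c_{i,i-s}=p^iA_s(0)$. One small point you leave implicit and might state explicitly: the coefficients $c_{ij}$ are genuine constants (in $\Q_p$, hence in $\Z_p$ once you identify them with $p^iA_j(0)$) because $y_0,\ldots,y_{n-1}$ is a basis of the solution space over the field of constants; this is what makes the evaluation at $t=0$ legitimate for determining the $c_{ij}$ rather than just their values at $0$.
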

This is the content of Remark 1.2 in \cite{IN} and it is proven in \cite[\S2]{IN}. 

We note, given the $\alpha_i$, equation~\eqref{Frob-str-equation} allows us to express the coefficients $A_0(t), \ldots, A_{n-1}(t)$ in terms of the power series $F_0(t), \ldots, F_{n-1}(t)$. Therefore, for any set of $p$-adic constants $\alpha_0,\ldots,\alpha_{n-1}$ one has a unique collection of power series $A_0,\ldots,A_{n-1} \in \Q_p\lb t \rb$ for which  equation~\eqref{Frob-str-equation} holds. The point of the above definition is the existence of a set of special values of $\alpha_i$ for which all $A_j$ belong to the specific ring $R\subset\Z_p\lb t\rb$.

In this paper we are going to compute the constants 
$\alpha_1, \ldots, \alpha_{n-1}$ defining the Frobenius structure for particular differential operators $L$. Our motivation came from the experimental results for the 4th order differential operators of Calabi-Yau type, which were reported in~\cite{DucoCreswick} and stated as a conjecture in~\cite[\S4.4]{COS21}. Namely, Candelas, de la Ossa and van Straten conjecture that for this class of operators one has $\alpha_1=\alpha_2=0$ and $\alpha_3$ is a rational multiple of $\zeta_p(3)$, the value of the $p$-adic zeta function at $3$. The only proved case is Shapiro's computation in~\cite{Sh09,Sh12} for the operator
\[
L = \theta^4 - (5 t)^5 (\theta+1)(\theta+2)(\theta+3)(\theta+4).
\]
Their conjecture is a natural $p$-adic analog of the monodromy conjecture stating that the matrix entries of the monodromy representation of Calabi--Yau differential operators of order $4$ in the standard basis of solutions belong to the ring generated over $\Q$ by $\zeta(3)/(2 \pi i)^3$, see~\cite[\S 2.7]{DucoCYOperators}. More generally, for Calabi--Yau operators of order $n$ the matrix entries of the monodromy representation in the standard basis of solutions are conjectured to take values in the ring generated over $\overline \Q$ by the numbers $\zeta(k)/(2 \pi i)^k$ with $2 \le k < n$, see~\cite[\S 2.7, Conjecture 2]{DucoCYOperators}. By analogy, we may expect the appearance of the respective $\zeta_p(k)$ among the constants $\alpha_j$ defining their Frobenius structure. Our main results, Theorems~\ref{main1} and~\ref{main2} below, support this expectation.

\begin{theorem}\label{main1} For any $p > n+1$ the differential operator
\[
L =  \theta^n - ((n+1)t)^{n+1} (\theta+1)\ldots(\theta+n)
\]  
has a $p$-adic Frobenius structure defined over the ring
\[
R = \text{ $p$-adic completion of }\; \Z[t,1/D(t)],
\]
where $D(t)=1-((n+1)t)^{n+1}$ is the leading coefficient of $L$.
The constants $\alpha_{j}$ defining the Frobenius structure via~\eqref{Frob-str-equation} are given by 
\[
\alpha_{j}= \text{ coefficient of } x^{j} \text{ in } \frac{\Gamma_p(x)}{\Gamma_p(x/(n+1))^{n+1}}, \quad 1 \le j \le n-1,
\]
where $\Gamma_p$ is Morita's $p$-adic $\Gamma$-function.
\end{theorem}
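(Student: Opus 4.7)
The proof proceeds in three stages: establishing existence of a Frobenius structure on $L$ over $R$, packaging all the constants $\alpha_j$ into a single generating function via a Frobenius-method deformation, and identifying that generating function with the Morita $\Gamma_p$ expression.

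\textbf{Stage 1: Existence.} The operator $L$ is the generalized hypergeometric operator annihilating the holomorphic series
\[
F_0(t) \= \sum_{m \ge 0} \frac{((n+1)m)!}{(m!)^{n+1}}\, t^{(n+1)m},
\]
equivalently the Picard--Fuchs operator of the Dwork pencil $\sum_{i=1}^{n+1}x_i^{n+1} - (n+1)t\,x_1\cdots x_{n+1} = 0$ of Calabi--Yau hypersurfaces in $\P^n$. For $p > n+1$, a $p$-adic Frobenius structure defined over the $p$-adic completion $R$ of $\Z[t,1/D(t)]$ is provided by Dwork's construction for hypergeometric differential equations (or equivalently by rigid cohomology of the Dwork family). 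The hypothesis $p > n+1$ is used to ensure that all hypergeometric parameters $j/(n+1)$ are $p$-adic units, which is essential for the $p$-adic analytic continuation to $R$.

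\textbf{Stage 2: Packaging the $\alpha_j$.} By Proposition~\ref{special-Frobenius-action}, $\alpha_j = A_j(0)$. To compute all the $\alpha_j$ uniformly, I would introduce the Frobenius-method deformation
\[
F(t;x) \= t^x\sum_{m \ge 0}Q_m(x)\,t^{(n+1)m},
\]
where the $Q_m(x)$ are the natural $p$-adic extensions of the hypergeometric coefficients with $Q_m(0) = ((n+1)m)!/(m!)^{n+1}$. Then $F(t;x) = \sum_{i\ge 0}y_i(t)\,x^i$ as a formal identity in $x$, and the whole system~\eqref{Frob-str-equation} repackages into the single functional equation
\[
\mathcal{A}\!\bigl(F(t^p;x)\bigr) \= \phi(px)\,F(t;px), \qquad \phi(x) := \sum_{j\ge 0}\alpha_j x^j.
\]
Dwork's theory produces an explicit formula for the coefficients $A_l(t) \in R$ of $\mathcal{A}$ in terms of the series $F(t;x)$ and its "Dwork prime" $F^\natural(t;x)$ (with $p$-divisible indices removed from the factorial products), and simultaneously determines $\phi(x)$ as a specific $p$-adically convergent ratio.

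\textbf{Stage 3: Morita formula.} The ratio governing $\phi(x)$ is controlled at $t = 0$ by the constant terms $Q_0(x)$ and $Q_0^\natural(x)$. Writing the factorial ratio $p$-adically via Morita's $\Gamma_p$---so that $Q_m(x)$ is the $p$-adic interpolation $\Gamma_p(1+x+(n+1)m)/\Gamma_p(1+x/(n+1)+m)^{n+1}$ with the Dwork prime stripping the $p$-multiple factors---the numerator $((n+1)m)!$ contributes the $\Gamma_p(x)$ factor and the denominator $(m!)^{n+1}$ contributes the $(n+1)$-fold product $\Gamma_p(x/(n+1))^{n+1}$. After applying Morita's functional equation to move all arguments to $0$, one obtains
\[
\phi(x) \= \frac{\Gamma_p(x)}{\Gamma_p(x/(n+1))^{n+1}},
\]
and reading off the coefficient of $x^j$ yields the desired formula.

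\textbf{Main obstacle.} The crux is Stage 2: justifying, as a $p$-adically convergent statement, that the Dwork functional equation admits a solution with $A_l(t)\in R$ and $\phi(x) \in \Z_p\lb x\rb$. This requires Dwork's $p$-adic congruences for hypergeometric coefficients of the form $((n+1)m)!/(m!)^{n+1}$, and it is precisely here that the hypothesis $p > n+1$ enters to keep the congruence denominators $p$-adic units. Once this explicit formula is in hand, the identification in Stage 3 is a direct $\Gamma_p$ calculation.
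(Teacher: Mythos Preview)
Your outline follows the Dwork--Shapiro hypergeometric route, which is genuinely different from the paper's argument. The paper does not exploit the hypergeometric shape of $L$ at all: it realises $L$ as the Picard--Fuchs operator of the toric family $1 - t(x_1+\cdots+x_n+(x_1\cdots x_n)^{-1})=0$, applies the Cartier operator $\cartier$ on the module $\hat\sO_f^\circ$, and gets existence of the Frobenius structure over $R$ from the cyclic $\theta$-module property together with the $n$-th Hasse--Witt decomposition $\hat\sO_f^\circ \cong \sO_f^\circ(n)\oplus\fil_n$. The constants $\alpha_j$ are then extracted by expanding $\cartier(1/f)$ in an explicit basis $\eta_{\v U}$ of $\sO_{f^\sigma}^\circ(n)$, reading off Laurent coefficients at lattice points $p^s\v V$ as $s\to\infty$, and recognising the resulting multinomial ratios $\binom{p^{s+1}|\v V|}{p^{s+1}V_0,\ldots,p^{s+1}V_n}/\binom{p^s|\v V|}{p^sV_0,\ldots,p^sV_n}$ as values of $\Gamma_p$. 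The final step is a combinatorial alternating-sum identity $\sum_{j=0}^{n+1}(-1)^j\binom{n+1}{j}F(jx)/F(x)^j\equiv 0\pmod{x^{n+1}}$ that lets one pass from the accessible vectors $|\v V|\le n$ to the inaccessible $\v V=(1,\ldots,1)$. The payoff is that nothing here is specific to hypergeometric equations, so the same machinery proves Theorem~\ref{main2} for the non-hypergeometric hyperoctahedral operator.

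Your Stage~2 is where the actual proof has to live, and as written it is an assertion rather than an argument. You invoke ``Dwork's theory'' to produce both $A_l(t)\in R$ and the closed form for $\phi$, but Dwork's and Kedlaya's hypergeometric Frobenius constructions are formulated for \emph{distinct} local exponents at $t=0$; the MUM case with all exponents equal to $0$ is exactly the degenerate boundary that those references do not cover directly (the paper remarks on this explicitly, noting that Kedlaya proposes $p$-adic interpolation for equal exponents). Your single-parameter deformation $F(t;x)$ still has all exponents equal (to $x$), so you are not in the generic regime and cannot simply cite the existing machinery; Shapiro's $n=4$ argument uses a genuinely multi-parameter perturbation and a delicate limit, and that limit computation is the content you would have to supply here for general $n$. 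Separately, Stage~1 does not establish that the Frobenius structure is defined over the specific ring $R$ (the $p$-adic completion of $\Z[t,1/D(t)]$) rather than merely over $E_p$ or $\Z_p\lb t\rb$; in the paper this comes out of the Cartier-operator construction, whereas a black-box appeal to rigid cohomology does not obviously pin down $R$. Finally, Stage~3 is a one-line heuristic (``the numerator contributes $\Gamma_p(x)$, the denominator contributes $\Gamma_p(x/(n+1))^{n+1}$'') where an actual $\Gamma_p$ computation with the Dwork-prime truncation is required.
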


Similarly to the classical gamma function, the expansion coefficients of $\Gamma_p(x)$ at $x=0$ involve $p$-adic
zeta values. Namely, by~\cite[\S58 and \S61]{Sch} we have
\be\label{gamma-zeta-relation}
\log \Gamma_p(x)=\Gamma'_p(0)x-\sum_{m\ge2}\frac{\zeta_p(m)}{m}x^m.
\ee
The numbers $\zeta_p(m)$ are called $p$-adic zeta values and they are given by 
\[
\zeta_p(m) = \text{ $p$-adic limit of }-(1-p^{n-1})\left.\frac{B_n}{n}\right|_{n=1-m+(p-1)p^r} \text{ as } r\to\infty,
\]
where $B_n$ is the $n$-th Bernoulli number.
In particular we find that $\zeta_p(m)=0$ when $m$ is even. For a more extended discussion of the values $\zeta_p(m)$, in particular the expansion~\eqref{gamma-zeta-relation}, we recommend \cite[Appendix B]{COS21}. 

As an illustration, we compute some expansions in Theorem~\ref{main1} for small $n$:
\[\bal
\frac{\Gamma_p(x)}{\Gamma_p(x/5)^5} &= 1 - \frac{8}{25} \zeta_p(3)  x^3 + O(x^4) \\
\frac{\Gamma_p(x)}{\Gamma_p(x/6)^6} &= 1 - \frac{35}{108} \zeta_p(3) x^3 + O(x^5) \\
\frac{\Gamma_p(x)}{\Gamma_p(x/7)^7} = 1 - &\frac{16}{49} \zeta_p(3) x^3 - \frac{480}{2401} \zeta_p(5) x^5 + O(x^6) \\
\frac{\Gamma_p(x)}{\Gamma_p(x/8)^8} = 1 - \frac{21}{64} \zeta_p(3) & x^3 - \frac{819}{4096} \zeta_p(5) x^5 + 
\frac{441}{8192} \zeta_p(3)^2 x^6 + O(x^7) \\
\eal\]
The differential operator in this theorem is a Picard-Fuchs operator of a family of algebraic varieties parametrized by $t$. Particularly, such a family is given by the equation $g(\v x)=1/t$ with the Laurent polynomial 
\be\label{simplicial-g}
g(\v x)=x_1+\cdots+x_n+\frac{1}{x_1\cdots x_n}.
\ee
We call it the \emph{simplicial family} referring to the shape of the Newton polytope of $g(\v x)$. One can easily see that the operator $L$ in Theorem~\ref{main1} annihilates the period integral
\[
y_0(t) = \frac1{(2 \pi i)^n} \oint \ldots \oint \frac1{1-t g(\v x)}\frac{dx_1}{x_1} \ldots \frac{dx_n}{x_n} = \sum_{k=0}^\infty \frac{(k(n+1))!}{k!^{n+1}} t^{(n+1)k}.
\] 

In the case $n=4$ of the famous quintic threefold we see that we get $\alpha_3=-8\zeta_p(3)/25$. In 
\cite[Conjecture 4.4]{COS21} the authors conjecture the limit value $-40\zeta_p(3)$.
The difference by the factor $5^3$ can
be explained by the fact that our differential equation comes from the differential equation in
\cite[\S 6.1]{COS21} after replacing $\phi$ by $t^5$.

Our second example shows that $p$-adic zeta values may also occur among the Frobenius structure constants of non-hypergeometric Picard-Fuchs differential operators. Consider the Laurent polynomial
\be\label{hyperoctahedral-g}
g(\v x)=x_1+\frac{1}{x_1}+x_2+\frac{1}{x_2}+\cdots+x_n+\frac{1}{x_n}.
\ee
The corresponding family of varieties $g(\v x)=1/t$ is called the \emph{hyperoctahedral family}, also
after the shape of its Newton polytope. In~\cite[\S 6]{IN} we constructed a differential operator $L \in \Z[t][\theta]$ of order $n$ which annihilates the respective  period function 
\[
y_0(t) = \frac1{(2 \pi i)^n} \oint \ldots \oint \frac1{1-t g(\v x)}\frac{dx_1}{x_1} \ldots \frac{dx_n}{x_n} = \sum_{k=0}^\infty t^{2k} \sum_{k_1+\ldots+k_n=k} \frac{(2k)!}{(k_1!\cdots k_n!)^2}.
\]
This operator $L$ is not of hypergeometric origin,
but it is of MUM-type. Here are a few examples. For $n=4$ we have
\begin{eqnarray*}
L&=&(1 - 80 t^2 + 1024 t^4) \theta^4 +(-320 t^2 + 8192 t^4)\theta^3\\
&&+(-528 t^2 + 23552 t^4)\theta^2 +(-416 t^2 + 28672 t^4)\theta -128t^2+12288t^4,
\end{eqnarray*}
which is equation $\#16$ from \cite{AESZ10} with $z$ replaced by $t^2$.
When $n=5$ we have
\begin{eqnarray*}
L&=&(1 - 140 t^2 + 4144 t^4 - 14400 t^6) \theta^5+(-700 t^2 + 41440 t^4 - 216000 t^6)\theta^4\\
&&(-1568 t^2 + 163280 t^4 - 1224000 t^6) \theta^3+(-1904 t^2 + 316640 t^4 - 3240000 t^6)\theta^2\\
&&(-1216 t^2 + 300096 t^4 - 3945600 t^6)\theta-320 t^2 + 109440 t^4 - 1728000 t^6.
\end{eqnarray*}
We are indebted to Jaques-Arthur Weil and Alexandre Goyer who verified that $L$
is irreducible in $\Q(t)[\theta]$ for $2\le n\le 30$. We conjecture that it is irreducible for all $n\ge2$.
 
\begin{theorem}\label{main2}
Let $L \in \Z[t][\theta]$ be a differential operator of order $n$, which was constructed
in \cite[Proposition 6.1]{IN} corresponding to the hyperoctahedral family in $n$ dimensions; this operator is of MUM type and can be chosen so that its leading coefficient $D(t) \in \Z[t]$ has $D(0)=1$. If $L$ is irreducible in $\Q(t)[\theta]$, then for any $p>n$ operator $L$ has a $p$-adic Frobenius structure. This Frobenius structure is defined over the ring
\[
R = \text{ $p$-adic completion of }\; \Z[t,1/D(t)].
\]
For  $1 \le j \le n-1$, the $p$-adic constants $\alpha_j$ defining the Frobenius structure via~\eqref{Frob-str-equation} are given as the coefficients of $x^j$ in the expansion of
\[
\Gamma_p(x)\, e^{- \Gamma_p'(0)x} = \exp\left( - \sum_{m \ge 2} \frac{\zeta_p(m)}m x^m \right).
\]
\end{theorem}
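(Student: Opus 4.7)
The plan is to parallel the structure of the proof of Theorem~\ref{main1}, invoking the general framework developed in our companion paper~\cite{IN} in place of the direct hypergeometric manipulations that are available only in the simplicial case. First, I would apply the Frobenius construction from~\cite{IN} to the hyperoctahedral Laurent polynomial $g$ of~\eqref{hyperoctahedral-g}: its Newton polytope is the cross-polytope with $0$ as the unique interior lattice point, and the family $\{g(\v x)=1/t\}$ is smooth precisely off the locus $D(t)=0$. The general theorem of~\cite{IN} then produces a Frobenius structure on the Picard-Fuchs module of the family defined over the $p$-adic completion of $\Z[t,1/D(t)]$; the hypothesis $p>n$ is what is needed to ensure integrality of the underlying formal Cartier operator. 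Invoking Remark~\ref{uniqueness-remark} together with the assumed irreducibility of $L$, this Frobenius structure descends to a unique operator $\sA$ on $L$ itself, yielding the existence part of the theorem.

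Second, the constants $\alpha_j=A_j(0)$ are to be read off from $\sA$ at $t=0$, which by~\cite{IN} reduces to a $p$-adic limit of a specific combination of the coefficients of the standard basis power series $F_0,\ldots,F_{n-1}$. For the hyperoctahedral family these series have explicit multinomial expressions starting from
\[
y_0(t) = \sum_{k\geq 0} t^{2k}\sum_{k_1+\cdots+k_n=k}\frac{(2k)!}{\prod_i(k_i!)^2},
\]
and analogous expressions for the higher $F_i$ obtained by formal differentiation with respect to auxiliary parameters, in the spirit of the Frobenius deformation method. Applying the Gross-Koblitz formula turns the factorials into values of Morita's $\Gamma_p$ and rewrites $\alpha_j$ as an explicit polynomial in $\zeta_p(k)$ with $2\leq k\leq j$, indexed by unordered partitions of $j$ into parts $\geq 2$.

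The main obstacle, and the bulk of the work, is the final combinatorial identification of this explicit $\zeta_p$-polynomial with the $x^j$ coefficient of the displayed finite-difference expression. I expect this to arise from a M\"obius-type inversion over the number of nonzero coordinates among $(k_1,\ldots,k_n)$ in the multinomial sum---so that the contribution of tuples with exactly $m$ nonzero coordinates feeds into the $m$-th term of $\sum_{m=0}^j(-1)^{j-m}\binom{j}{m}$---combined with the log expansion $\log(\Gamma_p(mx)/\Gamma_p(x)^m) = \sum_{k\geq 2}\zeta_p(k)(m-m^k)/k\cdot x^k$ from~\eqref{gamma-zeta-relation} and the Stirling identity $\sum_{m=0}^j(-1)^{j-m}\binom{j}{m}m^\ell = j!\,S(\ell,j)$. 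Although each ingredient is standard, verifying that the combinatorics align precisely---in particular that the resulting $A_j(t)$ lie in the specific ring $R$ and not merely in $\Z_p\lb t\rb$---is the delicate step, reflecting the non-hypergeometric nature of the operator.
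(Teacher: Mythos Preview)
Your outline captures the correct high-level shape---existence via the machinery of~\cite{IN}, then an explicit computation of $\alpha_j=A_j(0)$ involving $\Gamma_p$ and a finite-difference/M\"obius inversion---but the route you propose for the computation is not the one the paper takes, and as written it has a real gap.

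The paper does \emph{not} compute $\alpha_j$ from $p$-adic limits of the solution series $F_0,\ldots,F_{n-1}$. Instead it works entirely on the level of the Cartier operator $\cartier$ acting on the module of admissible rational functions. The key inputs are: (a) the direct-sum decomposition $\hat\sO_{f^\sigma}^\circ\cong\sO_{f^\sigma}^\circ(n)\oplus\fil_n^\sigma$ from~\cite{DCIII}, together with Lemma~\ref{fil-n-are-derivatives} showing $\fil_n^\sigma\cap(\hat\sO_{f^\sigma}^\circ)^\sG\subset d\hat\sO_{f^\sigma}$; (b) an explicit $\sG$-symmetrized generating set $\eta_{\v u}$ of $(\sO_f^\circ)^\sG(n)$ built from Stirling numbers; (c) writing $\cartier(1/f)\equiv\sum\lambda_{\v u}(t)\eta_{\v u}^\sigma\bmod\fil_n^\sigma$ and extracting the Laurent coefficient at $\v x^{p^s\v v}$ for $\v v=(1,\ldots,1,0,\ldots,0)$ with $m$ ones. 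This yields supercongruences of the shape $\Gamma_p(p^{s+1}m)/\Gamma_p(p^{s+1})^m\equiv\sum\lambda_{\v u}(0)\,c_{\v u,m}\,p^{s|\v u|}\pmod{p^{(s+1)n}}$, from which the Taylor coefficients of $\Gamma_p(mx)/\Gamma_p(x)^m$ are read off by letting $s\to\infty$. The finite-difference inversion $\sum_m(-1)^{j-m}\binom{j}{m}$ then isolates exactly the terms with $\ell(\v u)=j$, which match $A_j(0)$ via a separate computation (Lemma~\ref{eta-to-cyclic-basis-at-0}) of the constants $\mu_{\v u,j}(0)$ relating $\eta_{\v u}$ to $\theta^j(1/f)$ modulo $d\sO_f$.

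Your proposed route via $F_0,\ldots,F_{n-1}$ and Gross--Koblitz is not supported by anything in~\cite{IN}: that paper does not reduce $\alpha_j$ to a limit of solution-series coefficients, and there is no obvious way to apply Gross--Koblitz to the non-hypergeometric multinomial sums defining the hyperoctahedral $F_i$ (the higher $F_i$ do not even have closed multinomial expressions in any evident way). The final M\"obius step you sketch is correct in spirit, but without the Cartier/$\fil_n$ machinery you have no mechanism that produces the congruences feeding into it. You should replace the solution-series idea with the Cartier-operator computation on $(\sO_f^\circ)^\sG$ as above.
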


\medskip

For small $j$ this theorem gives
\[\bal
&\alpha_1=\alpha_2=0, \quad \alpha_3=-\zeta_p(3)/3, \quad \alpha_4=0,\\
&\alpha_5= -\zeta_p(5)/5, \quad \alpha_6= \zeta_p(3)^2/18,\quad \alpha_7=-\zeta_p(7)/7,\\
&\quad \alpha_8=\zeta_p(3)\zeta_p(5)/15,\quad \alpha_9=-(18 \zeta_p(9)+\zeta_p(3)^3)/162.
\eal\]

Interestingly enough, in this case any particular $\alpha_j$ does not depend on $n$. In
the case $n=4$ we see that $\alpha_3=-\zeta_p(3)/3$, while \cite[Conjecture 4.4]{COS21} predicts
the limit value $-8\zeta_p(3)/3$. The extra factor $8$ comes from the fact that our differential
equation is $\#16$ from \cite{AESZ10} with $z$ replaced by $t^2$.

Although Theorems \ref{main1} and \ref{main2} deal with particular examples,
our paper presents a method
which allows to determine the $p$-adic Frobenius structure of a differential equation of
Picard--Fuchs type explicitly. There are very few results of this kind in the literature.
Our major predecessor is paper~\cite{Sh09} which was already mentioned here.
Secondly, in~\cite{Ked19} Kedlaya gives an explicit description of the Frobenius structure
for hypergeometric differential operators whose local exponents at $t=0$ are distinct.
He uses Dwork's construction which is particular for hypergeometric differential operators.
Example 4.1.1 in loc. cit. deals with the case $n=4$ of our Theorem~\ref{main1}. 
There Kedlaya suggests to treat the cases of equal local exponents by $p$-adic interpolation.
Our method is essentially different. It is based on a generalization in~\cite{DCIII} of Katz's explicit description of unit root crystals. The key result is a decomposition of a module of differential $n$-forms on the complement of the hypersurface $g(\v x)=1/t$ into a sum of a finite rank free module and a module of forms whose expansion coefficients satisfy certain congruence conditions. We explain this in more detail in Section~\ref{sec:Frobenius-structure}. With some modifications in the computational steps,
we expect that many other differential operators $L$ can be treated along the same lines. 

{\bf Acknowledgement}: We would like to thank the referee for a careful reading of the manuscript and for several valuable observations which have helped improve the paper. We also thank Don Zagier for pointing out a simpler form of expression for $\alpha_j$ in Theorem~\ref{main2}.

\section{Construction of the Frobenius structure}\label{sec:Frobenius-structure}

We consider families of varieties parametrized by $t$. More particularly, they are given by zero sets of  \[
f(\v x)=1- t g(\v x),\]
where $g(\v x)$ is a Laurent polynomial in the variables $x_1,\ldots,x_n$ and coefficients in $\Z$. The support of $f(\v x)$ is denoted by $\supp(f) \subset \Z^n$, this is the finite set of exponent vectors of the monomials occuring in $f$. The convex hull of $\supp(f)$ is called the Newton polytope of $f$, it is denoted by $\Delta \subset \R^n$. We assume that $\Delta$ is
\emph{reflexive}. This condition means that $\Delta$ is of maximal dimension, contains $\v 0$
in its topological interior $\Delta^\circ$, and each codimension one face 
$\tau \subset \Delta$ can be given by an equation $\ell_\tau(\v u)=1$ where 
$\ell_\tau(\v u)=\sum_{i=1}^n \ell_i u_i$ is a linear functional with integral coefficients 
$\ell_i \in \Z$. For a vector $\v u \in \Z^n$ we denote by $\deg(\v u)$ the minimal number $d \ge 0$ such that $\v u \in d \Delta$. The condition that $\Delta$ is reflexive implies that $d\in\Z$. 

We start with the ring 
\[
R=\Z[t,1/D_f(t)],
\]
where $D_f(t)\in\Z[t]$ is some polynomial with $D_f(0)\ne0$. The choice of the polynomial $D_f(t)$ will depend on $f(\v x)$, which explains our notation for it. The elements of $R$ can be expanded into formal power series in $\Q\lb t \rb$ and $\ord_t$ denotes their $t$-adic valuation, the smallest degree of the terms in the power series. A Laurent polynomial $h(\v x) = \sum_\v u h_\v u \v x^\v u \in R[x_1^{\pm 1},\ldots,x_n^{\pm 1}]$ is called \emph{admissible} if $\ord_t(h_\v u) \ge \deg(\v u)$ for every $\v u \in \supp(h)$. For example, $f(\v x) = 1 - t g(\v x)$ is an admissible polynomial. We define the $R$-module of admissible rational functions as
\[
\sO_f = \left\{ (m-1)!\frac{h(\v x)}{f(\v x)^m} \;\Big|\; m \ge 1, h(\v x) 
\text{ is admissible, }\supp(h) \subset m \Delta \right\}.
\]
The submodule $\sO_f^\circ \subset \sO_f$ is defined by the stronger condition $\supp(h) 
\subset m \Delta^\circ$. The submodule of derivatives $d\sO_f \subset \sO_f$ is the $R$-module
generated by $x_i \frac{\partial}{\partial x_i}(\eta)$ for $\eta \in \sO_f$ and $i = 1, \ldots, n$. 

Note that $\theta=t\frac{d}{dt}$ is a derivation on $R$. It extends to $\sO_f$ by acting on the coefficients of rational functions by the usual rules of differential calculus. The action of $\theta$ preserves the submodules $\sO_f^\circ$ and $d \sO_f$. In particular, $\sO_f^\circ/d\sO_f$ is a differential $R$-module. Although $d \sO_f \not\subset \sO_f^\circ$, we shall write $\sO_f^\circ / d\sO_f$ in place 
of $\sO_f^\circ / (\sO_f^\circ \cap d\sO_f)$ here and in other similar situations. Though we will not use it here, we like to note that multiplying elements of $\sO_f$ by $\frac{d x_1}{x_1}\wedge \cdots \wedge \frac{d x_n}{x_n}$ one obtains differential forms on the complement of the hypersurface $f(\v x)=0$ in the $n$-dimensional torus. Elements of $d\sO_f$ are then mapped to exact forms and the above mentioned differential module represents the Gauss-Manin connection.   

We now exploit the possible symmetries in $g(\v x)$. Let $\sG$ be a
finite group of monomial substitutions of $x_1,\ldots,x_n$ such that $g(\v x^\gamma)=g(\v x)$ for every
$\gamma\in\sG$.
By a monomial substitution we mean a substitution $x_i\to \v x^{\v w_i}$ for $i=1,\ldots,n$, where
$\v w_i\in\Z^n$ and $\det(\v w_1,\ldots,\v w_n)=\pm1$. Instead of $\sO_f^\circ$ we
can now take the symmetric part $(\sO_f^\circ)^{\sG}$ consisting of $\sG$-invariant elements and consider the quotient module $(\sO_f^\circ)^{\sG}/d\sO_f$. Since derivations of $R$ commute with monomial substitutions, this is again a differential $R$-module. 

Let us recall a definition from~\cite[\S 4]{IN}. 

\begin{definition}\label{cyclicMUM}
We shall say that the $R$-module $M=(\sO_f^\circ)^\sG /d \sO_f$ is a cyclic $\theta$-module
of rank $n$ if 
\begin{itemize}
\item[(i)]
for every $m\ge0$ and for every $\sG$-invariant admissible
Laurent polynomial $h(\v x)$ supported in $m\Delta$ we have
\[
m!\frac{h(\v x)}{f(\v x)^{m+1}}\is\sum_{j=0}^{\min(m,n-1)}b_j(t)\theta^j(1/f)
\quad \mod{d \sO_{f}}
\]
with $b_j(t)\in R$ for all $j$;
\item[(ii)]
the monic differential operator $L \in R[\theta]$ of order $n$ such that $L(1/f)\in d\sO_f$ is irreducible in 
$\Q(t)[\theta]$.
\end{itemize}

Note that existence of $L$ in~(ii) follows from~(i). We call $L$ the Picard-Fuchs differential operator associated to $M$. Write $L=\theta^n + \sum_{j=1}^{n}a_j(t)\theta^{n-j}$ with $a_j\in R$
for all $j$. 
We shall say that $M$ is of MUM-type (maximally unipotent local monodromy) if $a_j(0)=0$
for all $1 \le j\le n$.
\end{definition}

Our ring $R$ is a subring of $\Q(t)$. It is not hard to show that condition~(ii) above implies that no $R$-linear combination of $\theta^j(1/f)$, $0 \le j < n$ belongs to $d\sO_f$, see~\cite[Lemma 4.5]{IN}. Hence $M$ is a free $R$-module where these $n$ elements can be taken as a basis. 

Consider the simplicial family with $g(\v x)$ given in~\eqref{simplicial-g} and take $D_f(t)=(n+1)(1-((n+1)t)^{n+1})$. In~\cite[Proposition 5.1]{IN} we showed that the module $\sO_f^\circ/d \sO_f$ is a cyclic theta module of rank~$n$ whose Picard-Fuchs operator is the operator $L$ from Theorem~\ref{main1}. Though there are many symmetries of $g(\v x)$, in this example we get a cyclic $\theta$-module without taking the symmetric part. Here $\sG=\{1\}$. 

Consider the hyperoctahedral family with $g(\v x)$ given in~\eqref{hyperoctahedral-g}. Here we take the group $\sG$ generated by the permutations of $x_1,\ldots,x_n$ and $x_i\to x_i^{\pm1}$ for $i=1,\ldots,n$. In~\cite[Proposition 6.1]{IN} we showed existence of $D_f(t) \in \Z[t]$ for which condition~(i) of Definition~\ref{cyclicMUM} is satisfied and a MUM-type operator $L\in R[\theta]$ of order $n$ such that $L(1/f)\in d\sO_f$. We couldn't prove irreducibilty of $L$ but we checked it for small $n$ with computer algebra tools. This is essentially the differential operator from Theorem~\ref{main2}. 

Let $p$ be a prime such that $p \nmid D_f(0)$ and let 
\[
R = \text{ $p$-adic completion of }\Z[t,1/D_f(t)].
\]
Let $\hat\sO_f$ denote the $p$-adic completion of the module of admissible rational functions $\sO_f$.
In~\cite{DCI} and~\cite{DCIII} we introduced an $R$-linear operator
\[
\cartier: \hat\sO_f \to \hat\sO_{f^\sigma}
\] 
which sends $d \hat \sO_f$ to $d \hat \sO_{f^\sigma}$ and commutes with $\theta$. Here $f^\sigma(\v x)$ is the polynomial $f(\v x)$ with the Frobenius lift $\sigma:t\to t^p$ applied to its coefficients, which in our current situation is $f^\sigma(\v x)=1-t^p g(\v x)$. We call $\cartier$ the Cartier operator. It is related to the Frobenius map on exponential modules constructed by Dwork, see \cite[Theorem A.5]{DCI}, with an advantage that our $\cartier$ is defined directly on rational functions (or differential forms).

In \cite[Proposition 4.2]{IN} we show that if $(\sO^\circ_f)^\sG/d \sO_f$ is a cyclic $\theta$-module of rank $n$ and if $p \ge n$ does not divide $\#\sG\times D_f(0)$, then there exist 
$\lambda_i(t)\in p^i\Z_p\lb t \rb$ such that
\[
\cartier(1/f)\is \sum_{i=0}^{n-1}\lambda_i(t)(\theta^i(1/f))^\sigma
\quad \mod{d\hat\sO_{f^\sigma}}.
\]
The proof of this proposition actually shows that $\lambda_i(t)\in p^iR$, where $R$ is our 
$p$-adic completion of $\Z[t,1/D_f(t)]$. In addition, since the Frobenius lift is given by $t\to t^p$, we have that $(\theta^i(1/f))^\sigma=p^{-i}\theta^i(1/f^\sigma)$. Thus, by setting $A_i(t)=p^{-i}\lambda_i(t)$,
we arrive at the following. 
\begin{proposition}\label{cartier-mod-exact}
Let $(\sO_f^\circ)^\sG / d \sO_f$ be a cyclic $\theta$-module of rank $n$ over the ring $\Z[t,1/D_f(t)]$. Suppose $p \ge n$, $p\nmid\#\sG\times D_f(0)$ and $R$ is the $p$-adic completion of $\Z[t,1/D_f(t)]$. 
Then there exist $A_j(t)\in R$ such that 
\be\label{cartier-defines-A-i}
 \cartier (1/f) = \sum_{j=0}^{n-1} A_j(t) \, \theta^j(1/f^\sigma) \; \mod {d \hat \sO_{f^\sigma}}.
\ee
\end{proposition}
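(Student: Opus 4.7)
The strategy is to reduce directly to \cite[Proposition 4.2]{IN} and then carry out a basis change to pass from $(\theta^i(1/f))^\sigma$ to $\theta^i(1/f^\sigma)$. The hypotheses of the proposition, namely that $(\sO_f^\circ)^\sG/d\sO_f$ is a cyclic $\theta$-module of rank $n$, $p\ge n$, and $p\nmid \#\sG\cdot D_f(0)$, are precisely those of \cite[Proposition 4.2]{IN}. Invoking it supplies power series $\lambda_i(t)\in p^i\Z_p\pow{t}$ with
\[
\cartier(1/f) \equiv \sum_{i=0}^{n-1}\lambda_i(t)\bigl(\theta^i(1/f)\bigr)^\sigma \mod{d\hat\sO_{f^\sigma}}.
\]

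Next I would track the integrality of the $\lambda_i$ more carefully. The construction of $\lambda_i$ in the proof of \cite[Proposition 4.2]{IN} is iterative: one computes $\theta^k\cartier(1/f)$ modulo exact forms, uses condition~(i) of Definition~\ref{cyclicMUM} to express the result as an $R$-linear combination of the basis $\{(\theta^j(1/f))^\sigma\}$, and then solves a triangular linear system to read off the $\lambda_i$. Each step preserves $R$-coefficients: condition~(i) guarantees that the reductions have $b_j(t)\in R$, and the linear system has a triangular matrix whose diagonal entries are powers of $D_f(t)$, which are units in the $p$-adic completion $R$ because $p\nmid D_f(0)$. This refines the conclusion to $\lambda_i(t)\in p^iR$, as asserted in the paragraph preceding the proposition.

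Finally, I would translate into the basis $\theta^j(1/f^\sigma)$. Since $\sigma$ sends $t$ to $t^p$ and the Laurent polynomial $g(\v x)$ is fixed by $\sigma$, a short chain rule computation gives $\theta\circ\sigma = p\,\sigma\circ\theta$ on $\hat\sO_f$, and iterating yields $(\theta^i(1/f))^\sigma = p^{-i}\theta^i(1/f^\sigma)$. Substituting and setting $A_j(t):=p^{-j}\lambda_j(t)$ produces exactly \eqref{cartier-defines-A-i}, with $A_j(t)\in R$ because $\lambda_j(t)\in p^jR$.

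The main obstacle is the integrality upgrade in the second paragraph. The proof of \cite[Proposition 4.2]{IN} is naturally written over $\Z_p\pow{t}$, so the improvement to $R$ is a matter of verifying that no step of the recursive construction forces the introduction of denominators beyond powers of $D_f(t)$. Once that bookkeeping is done the rest of the argument is formal, essentially a rewriting using the commutation relation between $\theta$ and $\sigma$.
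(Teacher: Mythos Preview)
Your proposal is correct and follows essentially the same approach as the paper: invoke \cite[Proposition 4.2]{IN}, observe that its proof already yields $\lambda_i(t)\in p^iR$ rather than merely $p^i\Z_p\pow{t}$, and then use $(\theta^i(1/f))^\sigma=p^{-i}\theta^i(1/f^\sigma)$ to define $A_i=p^{-i}\lambda_i$. The paper compresses the integrality upgrade into a single sentence, whereas you spell out a plausible mechanism for it; both arrive at the same conclusion by the same route.
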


As we mentioned earlier, the derivatives $\theta^j(1/f)$, $0 \le j \le n-1$ are linearly independent modulo $d \sO_{f}$. The same holds for $f^\sigma$ instead of $f$, see~\cite[Remark 4.7]{IN}. However at this point it is not clear that these elements are linearly independent modulo the $p$-adic completion $d \hat \sO_{f^\sigma}$. In particular, it is not clear that the identity~\eqref{cartier-defines-A-i} determines the coefficients $A_i(t)$ uniquely. Our way out of this problem is to assume that the so-called $n$-th \emph{Hasse--Witt
condition} holds, see~\cite[Lemma 4.8]{IN}. The Hasse--Witt conditions are defined in~\cite{DCIII}, and they can be verified for the simplicial and hyperoctahedral families by \cite[Theorem 3.3]{IN}. Namely, Theorem ~3.3 in \emph{loc. cit.} states that if all proper faces of the Newton polytope $\Delta$ are \emph{simplices of volume}~1 and all coefficients of $g(\v x)$ at the vertices of $\Delta$ are in $\Z_p^\times$, then the $k$-th Hasse-Witt condition holds over the larger ring $\Z_p\lb t \rb$ for every $k \ge 1$. A proper face $\tau \subset \Delta$ is called a \emph{simplex of volume~1} if it has $\dim(\tau)+1$ vertices and all integral points in the cone generated by $\tau$ are $\Z$-linear combinations of the vertex vectors. It is clear that this condition is then satisfied by all subfaces $\tau' \subset \tau$, and therefore it suffices to check it for faces of codimension~1. The latter task is quite straightforward for the simplicial and hyperoctahedral families. 

The construction of the Frobenius structure for the Picard-Fuchs operators of our two families is accomplished by the following.

\begin{proposition}\label{cartier2frobenius}
Let assumptions and notations be as in Proposition \ref{cartier-mod-exact}. Assume in addition
that the $n$-th Hasse--Witt condition holds. Then the corresponding Picard-Fuchs differential equation
$L(y)=0$ has a $p$-adic Frobenius structure over $R$ with $\sA=\sum_{j=0}^{n-1}A_j(t)\theta^j$.
\end{proposition}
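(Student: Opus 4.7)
The plan is to lift the congruence of Proposition~\ref{cartier-mod-exact} to an identity of differential operators, from which the Frobenius structure follows directly. First I would apply $L$ to both sides of $\cartier(1/f) \equiv \sA(1/f^\sigma) \pmod{d\hat\sO_{f^\sigma}}$. Since $\cartier$ is $R$-linear, commutes with $\theta$, and preserves exact forms, it commutes with every element of $R[\theta]$ acting on $\hat\sO_f$. Because $L(1/f) \in d\sO_f$, the left-hand side becomes $\cartier(L(1/f)) \in d\hat\sO_{f^\sigma}$, giving the key vanishing
\[
L(\sA(1/f^\sigma)) \in d\hat\sO_{f^\sigma}.
\]

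Next, writing $L = \sum_{k=0}^n c_k(t) \theta^k$ with $c_n = 1$, I would introduce the monic operator
\[
\tilde L = \sum_{k=0}^n c_k(t^p)\, p^{n-k}\, \theta^k \in \hat R[\theta].
\]
Starting from the identity $\theta^k(h(t^p)) = p^k (\theta^k h)(t^p)$, a direct computation gives $\tilde L(h(t^p)) = p^n\, (Lh)|_{t \mapsto t^p}$ for any $h$. Two consequences follow: $\tilde L(1/f^\sigma) \in d\hat\sO_{f^\sigma}$ (taking $h = 1/f$), and $\tilde L(y(t^p)) = 0$ for every solution $y$ of $L(y) = 0$ (taking $h = y$).

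The main step is right Euclidean division in the noncommutative ring $\hat R[\theta]$ by the monic operator $\tilde L$: we can write $L \sA = B\, \tilde L + \rho$ with $\rho \in \hat R[\theta]$ of $\theta$-order strictly less than $n$. Applying this to $1/f^\sigma$ and combining the preceding two paragraphs yields $\rho(1/f^\sigma) \in d\hat\sO_{f^\sigma}$. This is the point at which the $n$-th Hasse--Witt condition enters: by \cite[Lemma 4.8]{IN}, the elements $\theta^j(1/f^\sigma)$ for $0 \le j < n$ remain $\hat R$-linearly independent modulo $d\hat\sO_{f^\sigma}$, which forces $\rho = 0$ and delivers the operator identity $L \sA = B\, \tilde L$.

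To conclude, for any solution $y(t)$ of $L(y) = 0$, applying this operator identity to $y(t^p)$ and using $\tilde L(y(t^p)) = 0$ gives $L(\sA(y(t^p))) = B(\tilde L(y(t^p))) = 0$, which is exactly the Frobenius structure property. The hard part will be the division step: without $p$-adic linear independence of the $\theta^j(1/f^\sigma)$ modulo exact forms, the remainder $\rho$ might be invisible after $p$-adic completion and the operator identity would fail. Every other step is essentially formal once the module-theoretic setup from \cite{IN} is available.
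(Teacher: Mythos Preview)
Your argument is correct and follows essentially the same route the paper defers to: the paper's proof consists entirely of the citation ``See \cite[\S4]{IN} starting from \cite[(16)]{IN},'' and your outline reconstructs precisely that argument---pass from the Cartier congruence to an operator identity $L\sA = B\tilde L$ via right division by the monic $\tilde L$, use \cite[Lemma~4.8]{IN} (which requires the $n$-th Hasse--Witt condition) to kill the remainder, and then evaluate on $y(t^p)$.
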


\begin{proof} See~\cite[\S4]{IN} starting from~\cite[(16)]{IN}.   
\end{proof}

To determine the constants $\alpha_j=A_j(0)$ observe that $\cartier(1/f)$ is an infinite $p$-adic series with terms in $\sO_{f^\sigma}$, see \cite[(3)]{DCIII}. Then apply reduction modulo  $d \sO_{f^\sigma}$ to each term (Dwork-Griffiths reduction) and finally set $t=0$. This procedure, in a different phrasing, was more or less followed by Shapiro in~\cite{Sh09} in the particular case $n=4$ of our Theorem~\ref{main1}. In the present paper we follow an essentially different approach based on \emph{supercongruences} for expansion coefficients of rational functions. Our method relies on the result from~\cite{DCIII} which is recalled in the following paragraph. 

In \cite{DCIII} we defined for $k \ge 1$ the module of \emph{$k$-th formal derivatives} by
\be\label{fil-k-def}
\fil_k=\{\omega\in \hat\sO_f \;|\; \cartier^s(\omega)\in p^{sk}\hat\sO_{f^{\sigma^s}}\ \text{for all $s\ge1$}\}.
\ee
By $\fil_k^\sigma$ we denote the similar submodule of $\hat\sO_{f^\sigma}$. One of the main results in
\cite{DCIII} is that if the $k$-th Hasse--Witt condition
is satisfied over $R$ then
\be\label{O-f-decomposition-k}
\hat \sO_f \cong \sO_f(k)\oplus\fil_k
\ee
as $R$-modules, see~\cite[Theorem 4.2 and Corollary 5.9]{DCIII}. Here $\sO_f(k)$ is the submodule of $\sO_f$ consisting of elements of the form 
$h(\v x)/f(\v x)^m$ with $m \le k$.
Moreover, a similar decomposition holds in $\hat \sO_f^\circ$ and in $\hat \sO_{f^\sigma}$, $\hat \sO_{f^\sigma}^\circ$ respectively. Finally, let us remark that the $k$-th Hasse-Witt condition includes the $\ell$-th Hasse-Witt conditions for all $1 \le \ell < k$. See~\cite[\S 5]{DCIII} or~\cite[\S 3]{IN}. 

We now prove a lemma which will be essential for our computation of $p$-adic constants $\alpha_j$ in the following sections.

\begin{lemma}\label{fil-n-are-derivatives} 
Suppose $(\sO_f^\circ)^{\sG}/d\sO_f$ is a cyclic $\theta$-module of rank $n$. Suppose also
that the $n$-th Hasse-Witt condition holds over $\Z_p\lb t \rb$. Then we have 
\[
\fil_n \cap (\hat\sO_f^\circ)^\sG \subset d\hat\sO_f.
\]
The analogous statement holds in $\hat\sO_{f^\sigma}$.
\end{lemma}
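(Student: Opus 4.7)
Take $\omega \in \fil_n \cap (\hat\sO_f^\circ)^\sG$; the goal is to show $\omega \in d\hat\sO_f$. First, I would extend condition~(i) of Definition~\ref{cyclicMUM} $p$-adically: approximate $\omega$ by elements of $(\sO_f^\circ)^\sG$ (averaging over $\sG$, which is possible since $p \nmid |\sG|$) and apply condition~(i) termwise, noting that by linearity the resulting reduction coefficients form Cauchy sequences in $R$. Together with the linear independence of $\theta^j(1/f)$ modulo $d\hat\sO_f$ guaranteed by Lemma~4.8 of~\cite{IN} (via the $n$-th Hasse--Witt condition), this yields unique $b_0,\ldots,b_{n-1}\in R$ with
\[
\omega \equiv \omega_1 := \sum_{j=0}^{n-1} b_j(t)\,\theta^j(1/f) \quad \mod{d\hat\sO_f}.
\]
Now apply $\cartier^s$ for arbitrary $s\ge 1$. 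Since $\omega - \omega_1 \in d\hat\sO_f$ and $\cartier$ sends $d\hat\sO_f$ to $d\hat\sO_{f^\sigma}$, we have $\cartier^s(\omega_1) \equiv \cartier^s(\omega) \mod{d\hat\sO_{f^{\sigma^s}}}$, which lies in $p^{sn}\hat\sO_{f^{\sigma^s}}$ by definition of $\fil_n$. Letting $M^{(s)}_{jl}(t)\in R$ denote the coefficients in $\cartier^s(\theta^j(1/f))\equiv \sum_l M^{(s)}_{jl}(t)\,\theta^l(1/f^{\sigma^s}) \mod{d\hat\sO_{f^{\sigma^s}}}$, and invoking Lemma~4.8 of~\cite{IN} for $f^{\sigma^s}$ gives
\[
\sum_{j=0}^{n-1} b_j(t)\, M^{(s)}_{jl}(t) \in p^{sn} R, \qquad l=0,\ldots,n-1.
\]

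The main step is to compute $M^{(s)}(0)$. Starting from $\cartier^s(\theta^j(1/f)) = \theta^j(\cartier^s(1/f))$ (since $\cartier$ commutes with $\theta$), expanding by the Leibniz rule, and reducing $\theta^m(1/f^{\sigma^s})$ for $m\ge n$ via the Picard--Fuchs equation, we evaluate at $t=0$. Two simplifications occur: since $\theta$ applied to any power series in $R$ vanishes at $t=0$, only the $i=j$ term survives in the Leibniz expansion; and the MUM condition $a_j(0)=0$ iterates to $\theta^m(1/f^{\sigma^s})|_{t=0} \equiv 0 \mod{d\hat\sO_{f^{\sigma^s}}}$ for $m\ge n$. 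Iterating Proposition~\ref{special-Frobenius-action} produces constants $\alpha^{[s]}_k$ with $\alpha^{[s]}_0 = 1$, and the computation yields $M^{(s)}_{jl}(0) = \alpha^{[s]}_{l-j}$ for $j\le l$ and $0$ otherwise. Thus $M^{(s)}(0)$ is unipotent upper triangular.

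Consequently $\det M^{(s)}(t) \in \Z_p\lb t\rb^\times$, so $M^{(s)}$ is invertible over $R_0 := \Z_p\lb t\rb$. Via the natural injection $R\hookrightarrow R_0$ (available since $D(0)\in\Z_p^\times$), the matrix equation above yields $b_j \in p^{sn} R_0$ for every $s\ge 1$, whence $b_j = 0$ in $R_0$ and therefore in $R$. Hence $\omega_1 = 0$ and $\omega \in d\hat\sO_f$, as claimed. The main obstacle is the explicit calculation of $M^{(s)}(0)$: the unipotent upper triangular form crucially depends on the MUM condition (which suppresses higher-degree Picard--Fuchs reductions at $t=0$) combined with the vanishing of $\theta^r$ at $t=0$ for $r \ge 1$ (which collapses the Leibniz expansion), while the explicit entries require careful iteration of Proposition~\ref{special-Frobenius-action}.
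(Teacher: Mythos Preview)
Your proof is correct but takes a genuinely different route from the paper's argument.

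\textbf{The paper's approach.} After writing $\omega = \sum_{i=0}^{n-1} c_i\,\theta^i(1/f) + \sum_i x_i\partial_i\nu_i$ (the same first step as yours), the paper invokes the direct sum decomposition $\hat\sO_f \cong \sO_f(n-1)\oplus\fil_{n-1}$ from \cite{DCIII} to split each $\nu_i = \omega_i + \delta_i$ with $\omega_i\in\sO_f(n-1)$ and $\delta_i\in\fil_{n-1}$. Since $x_i\partial_i\delta_i\in\fil_n$, the element $\omega' := \sum_i c_i\,\theta^i(1/f) + \sum_i x_i\partial_i\omega_i$ lies in $\sO_f(n)\cap\fil_n$, and the decomposition $\hat\sO_f\cong\sO_f(n)\oplus\fil_n$ forces $\omega'=0$. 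Then $\sum_i c_i\,\theta^i(1/f)\in d\sO_f$, and \cite[Lemma~4.6]{IN} (independence modulo the \emph{uncompleted} $d\sO_f$) gives $c_i=0$. No iteration of $\cartier$, no matrix computation.

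\textbf{Comparison.} Your argument bypasses the structural decomposition of \cite{DCIII} and instead exploits the dynamics of $\cartier^s$: you show directly that the transition matrices $M^{(s)}$ are invertible (via the MUM condition and the unipotence at $t=0$), so the congruences $bM^{(s)}\in p^{sn}R$ force $b=0$ by $p$-adic separatedness. This is more hands-on and self-contained in spirit, but it costs you several auxiliary inputs: you need Proposition~\ref{cartier-mod-exact} iterated to all $f^{\sigma^s}$, the cyclic $\theta$-module property and Lemma~4.8 of \cite{IN} for every $f^{\sigma^s}$, and the verification that the $\theta^m$-reductions via the Picard--Fuchs equation for $f^{\sigma^s}$ vanish at $t=0$ (your MUM induction). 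None of these is hard, but they should be stated explicitly. Also, your appeal to Proposition~\ref{special-Frobenius-action} for the $\alpha^{[s]}_k$ is logically fine (Proposition~\ref{cartier2frobenius} precedes this lemma) but is more than you need: invertibility of $M^{(s)}(0)$ only requires $A_0(0)=1$, which follows by taking constant terms in~\eqref{cartier-defines-A-i} at $t=0$. The paper's proof is shorter because the heavy lifting has already been absorbed into the decomposition~\eqref{O-f-decomposition-k}; your route would be the natural one if that decomposition were unavailable.
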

\begin{proof} We prove the statement in $\hat\sO_f$. Take an arbitrary element $\omega \in \fil_n \cap (\hat\sO_f^\circ)^\sG$.
Using the reduction from Definition~\ref{cyclicMUM}(i) we can write
\[
\omega = \sum_{i=0}^{n-1} c_i \, \theta^i(1/f) + \sum_{i=1}^n x_i \frac{\partial}{\partial x_i} \nu_i
\]
with some coefiicients $c_0,\ldots,c_n \in R$ and some elements 
$\nu_1,\ldots,\nu_n \in \hat\sO_f$. As the $(n-1)$st Hasse-Witt condition also holds we have
decomposition~\eqref{O-f-decomposition-k} for $k=n-1$ as modules over the larger ring $\Z_p\lb t \rb$.
We write $\nu_i = \omega_i + \delta_i$ with $\omega_i \in \sO_f(n-1)$ and $\delta_i \in \fil_{n-1}$
for $i=1,\ldots,n$. Note that $x_i \frac{\partial}{\partial x_i} \delta_i \in \fil_n$ and therefore
we have 
\be\label{auxiliary-element}
\sum_{i=0}^{n-1} c_i \, \theta^i(1/f) + \sum_{i=1}^n x_i \frac{\partial}{\partial x_i} \omega_i \in \fil_n.
\ee
Let us denote the element in the left-hand side of~\eqref{auxiliary-element} by $\omega'$. Since each 
$x_i \frac{\partial}{\partial x_i} \omega_i \in \sO_f(n)$ and $c_i\theta^i(1/f)\in\sO_f(n)$
we have $\omega' \in \sO_f(n)\cap \fil_n$.
The direct sum decomposition~\eqref{O-f-decomposition-k} for $k=n$ then implies that $\omega'=0$,
and hence $\sum_{i=0}^{n-1} c_i \theta^i(1/f) \in d \sO_f$. By~\cite[Lemma 4.6]{IN} we then have $c_i=0$ for all $0 \le i \le n-1$.
We conclude that $\omega \in d\hat\sO_f$. This proves our first claim. 

In $\hat\sO_{f^\sigma}$ the same proof works using decompositions~\eqref{O-f-decomposition-k} for $\hat\sO_{f^\sigma}$. 
\end{proof}

Now we can sketch the method which is used in the subsequent sections to prove Theorems~\ref{main1} and~\ref{main2}. Let $\eta_1,\ldots,\eta_d$ be a basis of $(\sO_f^\circ)^\sG(n)$. Decomposition~\eqref{O-f-decomposition-k} holds with $k=n$ in $\hat\sO^\circ_{f^\sigma}$ over the larger ring $\Z_p\lb t \rb$.
Then there  exist $\nu_1(t),\ldots,\nu_d(t)\in \Z_p\lb t \rb$ such that
\be\label{cartier-mod-fil-n}
\cartier(1/f)\is \sum_{i=1}^d\nu_i(t)\eta_i^\sigma\mod{\fil_n^\sigma}.
\ee
By Lemma~\ref{fil-n-are-derivatives} we know that $\fil_n^\sigma \cap (\hat \sO_f^\circ)^\sG\subset d\hat\sO_f$. Hence the equality in~\eqref{cartier-mod-fil-n} also holds modulo $d\hat\sO_{f^\sigma}$, and therefore the $\nu_i(t)$ determine the $A_j(t)$ in~\eqref{cartier-defines-A-i}. Elements of $\hat\sO_f$ can be expanded as formal Laurent series $\sum_{\v u \in \Z^n}c_{\v u}(t)\v x^\v u$ with coefficients $c_\v u \in \Z_p\lb t \rb$ and the Cartier action on such formal expansions is given by $\cartier: \sum c_\v u \v x^\v u \to \sum c_{p \v u} \v x^\v u$, see~\cite[\S 3]{DCIII} and~\cite[\S 2]{DCII}. It is then clear from the definition~\eqref{fil-k-def} that the expansion of an element of $\fil_n$ or $\fil_n^\sigma$ has the property that each $c_\v u(t)$ is divisible by $p^{\ord_p(\v u)}$. Therefore~\eqref{cartier-mod-fil-n} implies congruences for the expansion coefficients of the elements $1/f$ and $\eta_1,\ldots,\eta_d$, which turn out to be sufficient to determine the unknown coefficients $\nu_i(t)$ and in particular $\nu_i(0)$.

At the end let us fulfill the promise which was made in the Introduction. We give an example of an operator $L$ with a Frobenius structure over $\Z_p\lb t \rb$ for which the respective operator $\sA$ is not unique.   

\begin{proof}[Proof of Remark~\ref{uniqueness-remark}]
Suppose $L$ is of order $n=2$ and has Frobenius structure over $\Z_p\lb t\rb$ with operator $\sA$.
Suppose also that we have a non-trivial
solution $y_0(t)\in\Z_p\lb t\rb$ and a Wronskian determinant 
$W(t):= y_0(t)(\theta y_1)(t)-(\theta y_0)(t) y_1(t)\in\Z_p\lb t\rb^\times$. Define
for any $\lambda\in\Z_p$ the operator
\[
\sA+\lambda\frac{y_0(t)}{W(t^p)}\left(y_0(t^p)\theta-\theta(y_0(t^p))\right).
\]
Then one easily verifies that this operator maps $u(t^p)$ for any solution $u$ of $L(y)=0$
to a solution of $L(y)=0$.
\end{proof}

\section{The simplicial example}\label{sec:proof-of-main1}
This section is devoted to the proof of Theorem \ref{main1}. Recall the simplicial
family given by the equation $1 - t g(\v x)=0$ with
\[
g(\v x)=x_1+\cdots+x_n+\frac{1}{x_1\cdots x_n}.
\]
This family is related to the so-called Dwork families 
$X_0^{n+1}+\cdots+X_n^{n+1}=\frac{1}{t}X_0\cdots X_n$. Simply replace $x_i$ 
in $g(\v x)=1/t$ by $X_i^{n+1}/(X_0\cdots X_n)$ for $i=1,\ldots,n$.

We work with the Laurent polynomial $f(\v x)=1-t g(\v x)$ and use the construction given in the previous section with the trivial symmetry group $\sG=\{1\}$. From \cite[Proposition 5.1]{IN} we have the following,

\begin{proposition}\label{simplicial-is-cyclic}
Over the ring $\Z[t,1/D_f(t)]$ with $D_f(t)=(n+1)(1-((n+1)t)^{n+1})$ the module $\sO_f^\circ/d\sO_f$ is a cyclic $\theta$-module of MUM-type with the Picard--Fuchs operator 
\[
L = \theta^n - ((n+1)t)^{n+1}(\theta+1)\cdots(\theta+1).
\]
\end{proposition}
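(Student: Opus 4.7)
The plan is to verify the three conditions of Definition~\ref{cyclicMUM}: the cyclic reduction, the identification of the Picard--Fuchs operator as $L$, and its irreducibility together with the MUM-type property. The main technical step is a Griffiths--Dwork reduction showing that every element of $\sO_f^\circ$ is congruent modulo $d\sO_f$ to an $R$-linear combination of $\theta^j(1/f)$ for $0\le j\le n-1$. The method rests on the identity
\[
x_i\tfrac{\partial}{\partial x_i}\Bigl(\tfrac{x^\v u}{f^m}\Bigr) \= u_i\,\tfrac{x^\v u}{f^m} \+ mt\,\tfrac{x^\v u\, x_i\partial_i g}{f^{m+1}}
\]
which modulo exact derivatives relates monomials of pole order $m+1$ to those of pole order $m$. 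For the simplicial $g(\v x)=x_1+\cdots+x_n+1/(x_1\cdots x_n)$ one computes $x_i\partial_i g=x_i-1/(x_1\cdots x_n)$, and summing over $i$ yields the Euler-type identity $(n+1)/(x_1\cdots x_n)=g-\sum_{i=1}^n x_i\partial_i g$. Combined with the trivial identity expressing each vertex monomial $x_i$ as $x_i\partial_i g+1/(x_1\cdots x_n)$, and using that the Newton polytope is the reflexive simplex with vertices $e_1,\ldots,e_n,-\sum e_i$ and unimodular facets, every boundary monomial of $(m+1)\Delta$ may be rewritten modulo $d\sO_f$ as a monomial of $m\Delta$ times $g$ (or times an $x_j\partial_j g$). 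Since $tg=1-f$, iteration drops the pole order and terminates in a combination of $\theta^j(1/f)$. The denominator $D_f(t)=(n+1)\bigl(1-((n+1)t)^{n+1}\bigr)$ is produced by two sources: the factor $(n+1)$ inverts the coefficient in the Euler identity, while $1-((n+1)t)^{n+1}$ is the determinant of the linear system that must be inverted at the final reduction stage, and coincides with the discriminant locus where $f$ acquires a critical point.

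To identify $L$, take the period
\[
y_0(t) \= \tfrac{1}{(2\pi i)^n}\oint\!\cdots\!\oint \tfrac{1}{f(\v x)}\,\tfrac{dx_1}{x_1}\cdots\tfrac{dx_n}{x_n} \= \sum_{k\ge 0}\tfrac{((n+1)k)!}{k!^{n+1}}\,t^{(n+1)k},
\]
read off as the constant Laurent coefficient in $\v x$ of the geometric expansion of $1/f$. A direct ratio-test computation shows that $y_0$ satisfies the hypergeometric operator $L_{\mathrm{hyp}}:=\theta^n-((n+1)t)^{n+1}(\theta+1)\cdots(\theta+n)$, whose numerator parameters are $j/(n+1)$, $1\le j\le n$, and whose denominator parameters are $1,\ldots,1$. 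By the Beukers--Heckman criterion, no $\alpha_i-\beta_j$ is an integer, so $L_{\mathrm{hyp}}$ is irreducible in $\Q(t)[\theta]$ and is thus the minimal monic annihilator of $y_0$. On the other hand, the reduction above shows that the $R$-module $\sO_f^\circ/d\sO_f$ has rank at most $n$, so its Picard--Fuchs operator is monic of order at most $n$; since this operator annihilates the period $y_0$, it must coincide with $L_{\mathrm{hyp}}$, yielding $L=L_{\mathrm{hyp}}$ and rank exactly $n$. The MUM property is then immediate from the explicit form of $L$: writing $L=\theta^n+\sum_{j=1}^n a_j(t)\theta^{n-j}$, every $a_j(t)$ is divisible by $t^{n+1}$ and so vanishes at $t=0$.

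The principal obstacle is the bookkeeping in the reduction step. One must check that all denominators that accumulate during the inductive Griffiths--Dwork reduction divide a power of $D_f(t)$, so that the process takes place entirely inside $R=\Z[t,1/D_f(t)]$; equivalently, the matrix inverted at the final reduction stage---which translates boundary monomials of $(n+1)\Delta$ into $g$-multiples of lower monomials plus terms $x_j\partial_j g$ times lower monomials---must have determinant equal to $D_f(t)$ up to a unit in $\Z[t]$. This is a concrete finite computation governed by the vertex data of $\Delta$ and the coefficient vector of $g$, but it is the step in which the specific form of $D_f(t)$ is pinned down and where the proof genuinely uses the reflexive simplicial geometry of $\Delta$.
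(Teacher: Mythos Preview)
The paper does not actually prove this proposition here; it simply invokes \cite[Proposition~5.1]{IN}, the companion paper where the reduction is carried out in detail. So there is no in-paper argument to compare against, only a citation.

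Your sketch is a correct outline of the standard proof, and it is in the same spirit as what \cite{IN} does: Griffiths--Dwork reduction to establish condition~(i), identification of the Picard--Fuchs operator by matching against the hypergeometric annihilator of the explicit period series, and Beukers--Heckman for irreducibility. Your Euler-type identity $(n+1)x_0 = g - \sum_i x_i\partial_i g$ (with $x_0=(x_1\cdots x_n)^{-1}$) is exactly the relation that drives the reduction, and your explanation of where the two factors of $D_f(t)$ come from is right. One small wrinkle in your MUM verification: the displayed $L$ is not monic in $\theta$ --- its leading coefficient is $D(t)=1-((n+1)t)^{n+1}$ --- so the $a_j(t)$ in the monic form are the lower coefficients divided by $D(t)$; they still vanish at $t=0$ since $D(0)=1$, but the divisibility by $t^{n+1}$ applies to the numerators, not to the $a_j$ themselves.

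You are candid that the denominator bookkeeping in the reduction is the genuine work and you do not carry it out. That is indeed the substance of \cite[Proposition~5.1]{IN}: one must show that at each inductive step the linear system expressing boundary monomials of $k\Delta$ in terms of $g\cdot(\text{lower})$ and $x_j\partial_j g\cdot(\text{lower})$ is invertible over $\Z[t,1/D_f(t)]$, with the final determinant matching $D_f(t)$. Without that computation your argument establishes the structure over $\Q(t)$ but not over the specified ring $R$, so as written it is a plan rather than a proof.
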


Note that $L$ is the operator in Theorem~\ref{main1} and $D_f(t)=(n+1)D(t)$ where $D(t)$ is the leading coefficient of $L$.

We now fix a prime number $p>n+1$. The ring  
\[
R = \text{$p$-adic completion of } \Z[t,1/D(t)] 
\]
in our Theorem coincides with the $p$-adic completion of $\Z[t,1/D_f(t)]$ and embeds into $\Z_p\lb t \rb$.
By~\cite[Theorem 3.3]{IN} the $n$-th Hasse--Witt condition over $\Z_p\lb t \rb$ holds for the simplicial family. Then it follows from Proposition~\ref{cartier2frobenius}
that the first statement of Theorem \ref{main1} holds true. Namely, the differential operator $L$ has a Frobenius structure with $\sA=\sum_{j=0}^{n-1}A_j(t) \theta^j$ where the coefficients $A_j(t) \in R$ come from the expression
\be\label{cartier-action-on-f-mod-exact}
\cartier(1/f) \is \sum_{j=0}^{n-1} A_j(t) \, \theta^j(1/f^\sigma) \; \mod {d \hat\sO_{f^\sigma}}.
\ee
In view of Proposition~\ref{special-Frobenius-action} the respective $p$-adic constants are given by $\alpha_j=A_j(0)$. It remains to compute these values. 

In order to display the symmetry of $f(\v x)$ explicitly,
we introduce $x_0=(x_1\cdots x_n)^{-1}$. So $f=1-t(x_0+\cdots+x_n)$. 
To every $n$-tuple $\v u=(u_1,\ldots,u_n)$ of integers there exists
an $n+1$-tuple $\v U=(U_0,U_1,\ldots,U_n)$ of non-negative integers such that $\v x^{\v u}=\v x^{\v U}:=x_0^{U_0}\cdots x_n^{U_n}$. Notice that $\v U$ is determined up to shifts along the vector $(1,1,\ldots,1)$. We define $|\v U|:=U_0+\cdots+U_1$. Observe that $\v U$ is uniquely determined by $\v u$
if we require that $\min_iU_i=0$. In that case $|\v U|=\deg(\v u)$. Observe also that if $|\v U|\le n$
then at least one coefficient of $\v U$ must be zero.
For any $\v U\ge\v0$ (i.e. $U_i\ge0$ for all $i$) we define
\[
\omega_{\v U}=|\v U|!\frac{t^{|\v U|}\v x^{\v U}}{f^{|\v U|+1}}.
\]  

\begin{lemma}\label{omega-U-is-a-basis}
Suppose $p>n+1$. Then the rational functions $\omega_{\v U}$ with $|\v U|<n$ generate the $\Z_p\lb t \rb$-module generated by $\sO^\circ_f(n)$.
\end{lemma}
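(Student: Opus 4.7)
The plan is to work with the enlarged family
\[
\Omega_{m,\v V}\,:=\,(m-1)!\,\frac{t^{|\v V|}\,\v x^\v V}{f^m},\qquad \v V\in\Z_{\ge0}^{n+1},\ m\ge 1,
\]
so that $\omega_\v V=\Omega_{|\v V|+1,\v V}$ is exactly the case where the ``excess'' $e:=m-1-|\v V|$ vanishes. Unwinding the definition of $\sO_f^\circ(n)$, together with the observation that for the simplicial reflexive $\Delta$ the condition $\v u\in m\Delta^\circ$ is equivalent to $m>\deg(\v u)$, I would first check that $\sO_f^\circ(n)$ is generated as a $\Z_p\lb t\rb$-module by the $\Omega_{m,\v V}$ with $\v V$ canonical (i.e.\ $\min_iV_i=0$, so that $|\v V|=\deg(\v x^\v V)$), $m\le n$, and $e\ge 0$.

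The main computational step is to derive a reduction identity. Multiplying the tautology $1=f+t\sum_{i=0}^n x_i$ by $(m-1)!\,t^{|\v V|}\v x^\v V/f^m$ and rearranging yields
\[
\Omega_{m,\v V}\;=\;(m-1)\,\Omega_{m-1,\v V}\;+\;\sum_{i=0}^n \Omega_{m,\v V+e_i},
\]
where $e_0,\ldots,e_n$ is the standard basis of $\Z^{n+1}$. Crucially, both terms on the right hand side have excess $e-1$.

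With this identity in hand I would proceed by induction on the excess $e\ge 0$. For $e=0$ the element $\Omega_{m,\v V}$ equals $\omega_\v V$ with $|\v V|=m-1\le n-1$, which is in the proposed generating set. For $e\ge 1$ the identity expresses $\Omega_{m,\v V}$ as an integer linear combination of elements of excess $e-1$, which by the inductive hypothesis are $\Z_p\lb t\rb$-linear combinations of $\omega_\v U$ with $|\v U|<n$. At each stage one must verify that the reduced terms stay in $\sO_f^\circ(n)$: the condition $m'\le n$ is preserved since $m'\le m$, and the inequality $m'>|\v V'|\ge \deg$ of the underlying monomial follows from $e\ge 1$ in the inductive step, ensuring $\Omega_{m',\v V'}\in\sO_f^\circ(n)$.

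The one delicate point, and the main obstacle to a cleaner formulation, is the behaviour at $m=1$, where the coefficient $(m-1)$ vanishes and the recursion appears to loop. I expect this to be harmless: the conditions $m=1$ and $e\ge 0$ together force $\v V=\v 0$, and $\Omega_{1,\v 0}=1/f=\omega_\v 0$ is already a base case; no inductive path can produce a term $\Omega_{1,\v V}$ with $|\v V|>0$ and $e'\ge 0$. The induction therefore terminates, producing coefficients that are integers and so \emph{a fortiori} lie in $\Z_p\lb t\rb$, as required.
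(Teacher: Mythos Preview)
Your argument is correct and follows essentially the same approach as the paper: both reduce via the identity $1=f+tg$. The only difference is organizational---the paper inducts on the denominator exponent $k=m-1$ and multiplies by $(f+tg)^{k-|\v U|}$ to eliminate the entire excess at once, whereas you induct on the excess $e$ and peel off one factor of $f+tg$ per step.
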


\begin{proof}
Since $p>n+1$ we can ignore the factorials $|\v U|!$ in $\omega_{\v U}$. It suffices to show that any
function $\frac{t^{|\v U|}\v x^{\v U}}{f^{k+1}}$ with $|\v U|\le k<n$ is a 
$\Z_p[t]$-linear combination of the $\omega_{\v U}$ with $|\v U|<n$. 
We apply induction on $k$. For $k=0$ the statement is clear because $|\v U|=0$.
Let us suppose $k\ge1$ and the statement
is true for functions $\frac{t^{|\v U|}\v x^{\v U}}{f^k}$ with $|\v U|<k$. Suppose we like to
express $\frac{t^{|\v U|}\v x^{\v U}}{f^{k+1}}$ with $|\v U|<k$ as linear combination of the
$\omega_{\v U}$.
Multiply this function by $1=(f+tg)^{k-|\v U|}$. Using the induction hypothesis it
suffices to deal with
$\frac{t^k\v x^{\v U}g(\v x)^{k-|\v U|}}{f^{k+1}}$. All monomials in the expansion of $\v x^{\v U}
g(\v x)^{k-|\v U|}$ can be written in the form $\v x^{\v K}$ with $|\v K|=k$ and hence the rational
function is a linear combination of $\omega_{\v U}$ with $|\v U|=k$. 
\end{proof}

Let us expand $\omega_{\v U}$ as a Laurent series in $x_1,\ldots,x_n$ and coefficients
in $\Z_p\pow{t}$ as follows.
\begin{eqnarray*}
|\v U|!\frac{t^{|\v U|}\v x^{\v U}}{f^{|\v U|+1}}&=&|\v U|!
\sum_{k\ge0}t^{|\v U|+k}\v x^{\v U}\binom{|\v U|+k}{k-1}
(x_0+\cdots+x_n)^k\\
&=&|\v U|!\sum_{K_0,\ldots,K_n\ge0}t^{|\v U+\v K|}
\binom{|\v U+\v K|}{|\v U|,K_0,\ldots,K_n}\v x^{\v U+\v K},
\end{eqnarray*}
where $\v K=(K_0,K_1,\ldots,K_n)$ and $\binom{\sum_ir_i}{r_0,r_1,\ldots,r_n}$
is the notation for the multinomial
coefficient $\frac{(\sum_ir_i)!}{r_0!r_1!\cdots,r_n!}$. We use the convention
that this multinomial is $0$ if
$r_i<0$ for at least one $i$. Note that the support of this Laurent series in $x_1,\ldots,x_n$
lies in $\Z^n$. However, the coefficient of $x_1^{u_1}\cdots x_n^{u_n}$ is divisible
by $t^{\deg(\v u)}$. Let us remark that the formal expansion procedure which we use here is the expansion for Calabi-Yau crystals which is discussed at the end of~\cite[\S 3]{DCIII}. The Cartier operator acts on such expansions by formula $\cartier(\sum_{\v u \in \Z^n} c_\v u(t) \v x^\v u) = \sum_{\v u \in \Z^n} c_{p \v u}(t) \v x^\v u$.

Now let $\v V\in\Z^{n+1}$ with $\min(\v V)=0$ and $N$ a positive integer. Let us compute the coefficient at $\v x^{N\v V}$ in the formal expansion of $\omega_{\v U}$.
We collect all terms in the Laurent series such that $\v x^{N\v V}=\v x^{\v U+\v K}$
(as monomials in $x_1,\ldots,x_n$) and sum their coefficients. That is,
we collect all terms with $\v K=N\v V-\v U+\ell\v 1$,
where $\ell$ is a non-negative integer and
$\v 1=(1,1,\ldots,1)$. Clearly $\ell$ must be such that $\min(N\v V-\v U+\ell\v 1)\ge0$.
 The desired coefficient becomes
\begin{equation}\label{fullcoefficient}
|\v U|!\sum_{\ell\ge 0}t^{N|\v V|+(n+1)l}\binom{N|\v V|+(n+1)\ell}
{|\v U|,\ldots,NV_i-U_i+\ell,\ldots}.
\end{equation}

\begin{lemma}\label{omega-U-in-cyclic-basis}
For any $\v U$ with $\min(\v U)=0$ and $|\v U|<n$ we have
\[
\omega_{\v U}\is \left(\prod_{i=0}^n[\theta/(n+1)]_{U_i}\right)(1/f) \quad \mod{d\sO_f}.
\]
Here $[x]_k$ is the falling factorial $[x]_k=x(x-1)\cdots(x-k+1)$.
\end{lemma}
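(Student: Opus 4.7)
The plan is to prove the lemma by induction on $|\v U|$. The base case $\v U = \v 0$ is immediate, since $\omega_{\v 0} = 1/f$ and the empty product of falling factorials equals $1$. The heart of the argument is a ``raising'' recursion
\[
\omega_{\v V + \v e_j} \equiv \left( \frac{\theta}{n+1} - V_j \right) \omega_{\v V} \pmod{d \sO_f}, \qquad j = 0, 1, \ldots, n,
\]
valid for every $\v V \in \Z_{\ge 0}^{n+1}$, where $\v e_0, \ldots, \v e_n$ denotes the standard basis of $\Z^{n+1}$. Granting this, the inductive step is painless: pick $j$ with $U_j \ge 1$, set $\v V = \v U - \v e_j$, apply the induction hypothesis to $\omega_{\v V}$, and absorb the extra factor $\theta/(n+1) - V_j$ into the $i = j$ falling factorial via $[x]_{V_j + 1} = (x - V_j)[x]_{V_j}$. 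The falling factorials commute, since they are polynomials in $\theta$.

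To prove the recursion I would produce two ingredients and then solve the resulting linear system. The first is a direct computation of $\theta \omega_{\v V}$: using $\theta(t^m/f^{m+1}) = m t^m/f^{m+1} + (m+1)t^{m+1} g/f^{m+2}$ with $m = |\v V|$ and expanding $g = \sum_{i=0}^n x_i$, each summand is exactly $\omega_{\v V + \v e_i}/m!$, yielding
\[
\theta \omega_{\v V} = |\v V| \, \omega_{\v V} + \sum_{i=0}^n \omega_{\v V + \v e_i}.
\]
The second is a family of exact-form relations obtained by applying the derivations $x_j \partial_{x_j}$, $j = 1, \ldots, n$, to $\omega_{\v V}$. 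Interpreting $\v x^\v V$ in the independent variables $x_1, \ldots, x_n$ via $x_0 = (x_1 \cdots x_n)^{-1}$ gives $x_j \partial_{x_j}(\v x^\v V) = (V_j - V_0)\v x^\v V$ and $x_j \partial_{x_j} f = -t(x_j - x_0)$, which produces
\[
(V_j - V_0) \omega_{\v V} + \omega_{\v V + \v e_j} - \omega_{\v V + \v e_0} \equiv 0 \pmod{d \sO_f}.
\]
Summing these $n$ congruences and substituting into the $\theta \omega_{\v V}$ formula eliminates $\omega_{\v V + \v e_1}, \ldots, \omega_{\v V + \v e_n}$ and isolates $\omega_{\v V + \v e_0}$, which turns out to satisfy the recursion for $j = 0$; the cases $j \ge 1$ then follow by plugging the $j = 0$ case back into the exact-form relation.

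The main obstacle is purely the bookkeeping in the exact-form computation: the exponent $\v V$ lives naturally in $\Z^{n+1}$ with the Laurent constraint $x_0 x_1 \cdots x_n = 1$, but only $x_1, \ldots, x_n$ are independent, so the role of the index $0$ is asymmetric throughout the derivation. The symmetric form of the recursion (and hence of the product $\prod_{i=0}^n [\theta/(n+1)]_{U_i}$ in the statement) only emerges after combining the two asymmetric identities above and performing the elimination.
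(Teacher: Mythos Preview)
Your proof is correct and takes a genuinely different route from the paper's. The paper argues indirectly: it invokes the cyclic $\theta$-module structure together with the fact (imported from \cite[Theorem 5.1]{IN}) that for $|\v U|<n$ the reduction of $\omega_{\v U}$ modulo $d\sO_f$ has \emph{constant} coefficients $c_i\in\Z[(n+1)^{-1}]$, and then pins down those constants by comparing the coefficient of $\v x^{\v 0}$ in the Laurent expansions on both sides, which amounts to matching two polynomials in $\ell$. Your argument is instead a direct Griffiths--Dwork reduction: you derive the raising relation $\omega_{\v V+\v e_j}\equiv(\theta/(n+1)-V_j)\,\omega_{\v V}$ from the identities $\theta\omega_{\v V}=|\v V|\,\omega_{\v V}+\sum_i\omega_{\v V+\v e_i}$ and $x_j\partial_{x_j}\omega_{\v V}=(V_j-V_0)\omega_{\v V}+\omega_{\v V+\v e_j}-\omega_{\v V+\v e_0}$, and then build the product of falling factorials by induction. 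This is more self-contained (you do not need the external input that the $c_i$ are constants; in fact your recursion reproves it) and makes the appearance of falling factorials transparent. The paper's approach, on the other hand, avoids the asymmetric bookkeeping you flag and generalizes more readily to situations where one already knows the coefficients are constant but an explicit recursion is harder to write down. One small point worth making explicit in your write-up: when you apply the operator $\theta/(n+1)-V_j$ to a congruence modulo $d\sO_f$ you are using that $\theta$ preserves $d\sO_f$, and for the induction step you should note that since $|\v U|<n$ forces at least two coordinates of $\v U$ to vanish, the intermediate $\v V=\v U-\v e_j$ still satisfies $\min(\v V)=0$.
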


\begin{proof}
By Proposition~\ref{simplicial-is-cyclic} we have that $\sO_f^\circ/d\sO_f$ is a cyclic $\theta$-module. Moreover, the reduction procedure in the proof of \cite[Theorem 5.1]{IN} shows that property~(i) of Definition~\ref{cyclicMUM} for $m<n$ holds with constant coefficients $b_j \in \Z[(n+1)^{-1}]$. In particular,  there exist $c_i\in\Z[(n+1)^{-1}]$ such that
\[
\omega_{\v U}\is \sum_{i=0}^{n-1}c_i\theta^i(1/f)\mod{d\sO_f}.
\]
We now determine these $c_i$. Take the constant term of the Laurent expansion on
both sides. Using~\eqref{fullcoefficient} for $\v U=\v V = \v 0$ we find that the constant term of $\theta^i(1/f)$ reads
\[
\theta^i\left(\sum_{\ell\ge0}\binom{(n+1)\ell}{\ell,\cdots,\ell}t^{(n+1)\ell}\right)=
\sum_{\ell\ge0}(n+1)^i\ell^i\binom{(n+1)\ell}{\ell,\cdots,\ell}t^{(n+1)\ell}.
\]
On the left hand side we get \eqref{fullcoefficient} for the case $\v V=\v 0$, hence
\[
|\v U|!\sum_{\ell\ge\max_iU_i}\binom{(n+1)\ell}{|\v U|,\ldots,\ell-U_i,\ldots}t^{(n+1)\ell}.
\]
This expression is easily seen to be equal to
\[
\sum_{\ell\ge0}\prod_{i=0}^n[\ell]_{U_i}\binom{(n+1)\ell}{\ell,\cdots,\ell}t^{(n+1)\ell},
\]
and therefore we must have
\[
\prod_{i=0}^n[\ell]_{U_i} = \sum_{i=0}^{n-1} c_i (n+1)^i \ell^i
\]
for all $\ell \ge 0$. Hence the two sides of this identity are equal as polynomials in $\ell$, which determines the coefficients $c_i$ uniquely and yields our claim.
\end{proof}

In Lemma \ref{omega-U-is-a-basis} we have seen that the rational functions $\omega_\v U$ with $|\v U| < n$ generate the 
module $\sO_f^\circ(n)$. We now switch to another set of generators.
Let $S(m,k)$ for integers $m\ge k\ge0$ be the Stirling
numbers of the second kind. They are defined by the property that 
\[
x^m=\sum_{k=0}^mS(m,k)[x]_k
\]
for all $m,k\ge0$ where $[x]_k$ is the falling factorial $x(x-1)\cdots(x-k+1)$.
Note that $S(m,m)=1$ for $m \ge 0$
and $S(m,0)=0$ if $m\ge1$.

\begin{lemma}\label{eta-basis} For an $n+1$-tuple $\v U$ with $|\v U|<n$ we define 
\[
\eta_{\v U}=\sum_{K_0=0}^{U_0}\cdots\sum_{K_n=0}^{U_n}S(U_0,K_0)\cdots S(U_n,K_n)\omega_{(K_0,\ldots,K_n)}.
\]
Then,
\begin{itemize}
\item[(i)] one has $\eta_\v U = (\theta/(n+1))^{|\v U|}(1/f) \; \mod {d \sO_f}$,

\item[(ii)] the functions $\eta_\v U$ with $|\v U|<n$ generate the $\Z_p\lb t\rb$-module generated by $\sO_f^\circ(n)$,

\item[(iii)] there are power series $\lambda_\v U \in \Z_p\lb t \rb$ for $|\v U|<n$ such that
\be\label{cartier-on-f-mod-fil-n}
\cartier(1/f) = \sum_{\v U: |\v U|<n} \lambda_\v U(t) \eta_\v U^\sigma \; \mod {p^n \fil_n^\sigma}.
\ee
\end{itemize}
Moreover, the coefficients $A_i(t)$, $0 \le i < n$ in~\eqref{cartier-action-on-f-mod-exact}
are given by
\[
A_i(t) = p^{-i}(n+1)^{-i} \sum_{\v U: |\v U|=i} \lambda_\v U(t). 
\]
\end{lemma}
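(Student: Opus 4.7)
The plan is to prove the three clauses in succession and then deduce the formula for $A_i(t)$ from them. For (i), I would substitute Lemma~\ref{omega-U-in-cyclic-basis} into the defining sum for $\eta_\v U$: since $\v U$ is an $(n+1)$-tuple with $|\v U|<n$, automatically $\min(\v U)=0$, and every $\v K\le\v U$ appearing in the sum then also has $\min(\v K)=0$, so the lemma applies. Pulling the Stirling sums through the product and applying the identity $x^U=\sum_K S(U,K)[x]_K$ separately in each factor (with $x=\theta/(n+1)$) collapses the right-hand side to $\prod_{i=0}^n(\theta/(n+1))^{U_i}(1/f)=(\theta/(n+1))^{|\v U|}(1/f)$. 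Part (ii) then follows by a triangular change-of-generators argument: $S(U,U)=1$ and $S(U,K)=0$ for $K>U$ give that $\eta_\v U = \omega_\v U$ plus a $\Z$-linear combination of $\omega_\v K$ with $\v K$ strictly smaller than $\v U$ in the coordinatewise order, so the transition matrix from $\{\omega_\v U\}_{|\v U|<n}$ to $\{\eta_\v U\}_{|\v U|<n}$ is unit upper triangular and hence invertible over $\Z_p\lb t\rb$; by Lemma~\ref{omega-U-is-a-basis} the former set generates $\sO_f^\circ(n)$, so the latter does too.

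For (iii), I would apply the direct sum decomposition
\[
\hat\sO_{f^\sigma}^\circ \;=\; \sO_{f^\sigma}^\circ(n)\oplus\fil_n^\sigma
\]
from \cite[Corollary~5.9]{DCIII}, available over $\Z_p\lb t\rb$ because the $n$-th Hasse--Witt condition holds for the simplicial family by \cite[Theorem~3.3]{IN}. Applied to $\cartier(1/f)\in\hat\sO_{f^\sigma}^\circ$, this gives a decomposition $\cartier(1/f)=\omega'+\omega''$ with $\omega'\in\sO_{f^\sigma}^\circ(n)$ and $\omega''\in\fil_n^\sigma$. By (ii) I can write $\omega'=\sum_{|\v U|<n}\lambda_\v U(t)\,\eta_\v U^\sigma$ with $\lambda_\v U(t)\in\Z_p\lb t\rb$; the $\lambda_\v U$ are uniquely determined because the triangular step in (ii), combined with the $\Z_p\lb t\rb$-linear independence of the distinct leading Laurent monomials $\v x^\v K$ of the $\omega_\v K^\sigma$, shows that $\{\eta_\v U^\sigma\}_{|\v U|<n}$ is in fact a free $\Z_p\lb t\rb$-basis of $\sO_{f^\sigma}^\circ(n)$. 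Sharpening $\omega''\in\fil_n^\sigma$ to $\omega''\in p^n\fil_n^\sigma$ is the main obstacle; the extra $p^n$ precision should come from a refinement of the decomposition theorem in \cite{DCIII}, namely that $\cartier$ carries $\sO_f(n)$ into $\sO_{f^\sigma}(n)+p^n\fil_n^\sigma$, reflecting the $p^{-n}$ twist of Frobenius on the top Hodge piece in Katz's unit-root framework and matching the $p^{n\,\ord_p(\v u)}$ vanishing order of the coefficients of elements of $\fil_n^\sigma$.

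For the final formula, Lemma~\ref{fil-n-are-derivatives} (with $\sG=\{1\}$) gives $\fil_n^\sigma\cap\hat\sO_{f^\sigma}^\circ\subset d\hat\sO_{f^\sigma}$, so the congruence of (iii) also holds modulo $d\hat\sO_{f^\sigma}$. Combining (i) with the identity $(\theta^i(1/f))^\sigma=p^{-i}\theta^i(1/f^\sigma)$ gives
\[
\eta_\v U^\sigma \;\equiv\; (n+1)^{-|\v U|}\,p^{-|\v U|}\,\theta^{|\v U|}(1/f^\sigma)\pmod{d\hat\sO_{f^\sigma}}.
\]
Grouping the $\v U$'s by $i=|\v U|$ and comparing the resulting expansion of $\cartier(1/f)$ modulo $d\hat\sO_{f^\sigma}$ with \eqref{cartier-action-on-f-mod-exact} -- using the uniqueness of the coefficients $A_j(t)$ granted by the $n$-th Hasse--Witt condition via \cite[Lemma~4.8]{IN} -- yields $A_i(t)=p^{-i}(n+1)^{-i}\sum_{|\v U|=i}\lambda_\v U(t)$, as claimed.
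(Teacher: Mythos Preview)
Your proof is correct and follows essentially the same route as the paper's: Stirling identity plus Lemma~\ref{omega-U-in-cyclic-basis} for (i), the unit-triangular change of generators from Lemma~\ref{omega-U-is-a-basis} for (ii), the decomposition together with the $p^n$ refinement for (iii), and Lemma~\ref{fil-n-are-derivatives} plus \cite[Lemma~4.8]{IN} for the final identification of $A_i(t)$. The one place where you are tentative---the claim that $\cartier$ maps $\sO_f^\circ$ into $\sO_{f^\sigma}^\circ(n)+p^n\fil_n^\sigma$---is not a gap but precisely \cite[Corollary~4.3]{DCIII}, which the paper invokes at that step; you can cite it directly rather than speculate.
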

\begin{proof} (i) follows from Lemma~\ref{omega-U-in-cyclic-basis}.

(ii) follows from Lemma~\ref{omega-U-is-a-basis} and the observation that $\eta_\v U = \omega_\v U + $ a linear combination of 
$\omega_\v K$ with $|\v K|<|\v U|$.

Recall that the $n$th Hasse-Witt condition holds for the simplicial family over the ring $\Z_p\lb t \rb$. It was explained in Section~\ref{sec:Frobenius-structure} that over this larger ring we then have the decomposition
$\hat\sO_{f}^\circ = \sO_{f}^\circ(n) \oplus \fil_n$. Moreover, by~\cite[Corollary 4.3]{DCIII} we also have that 
\[
\cartier(\sO_f^\circ) \subset \sO_{f^\sigma}^\circ(n) + p^n \; \fil_n^\sigma .
\]
Existence of the $\lambda_{\v U}$'s in (iii) follows from this and (ii). 

By Lemma~\ref{fil-n-are-derivatives} applied in
$\hat\sO_{f^\sigma}$ we have $\fil_n^\sigma \cap \hat\sO_{f^\sigma}^\circ \subset d \hat\sO_{f^\sigma}$.
Hence identity~\eqref{cartier-on-f-mod-fil-n} is valid 
modulo~$d \hat\sO_{f^\sigma}$ and using~(i) we get
\[
\cartier(1/f) = \sum_{\v U: |\v U|<n} \lambda_\v U(t) p^{-|\v U|}(n+1)^{-|\v U|}\theta^{|\v U|}(1/f^\sigma)
\mod {d\hat\sO_{f^\sigma}}.
\]
Combined with Proposition \ref{cartier-mod-exact} and the independence of the $\theta^i(1/f^\sigma)$
modulo $d\hat\sO_{f^\sigma}$ (see \cite[Lemma 4.8]{IN}) we obtain the desired expression for the $A_i(t)$.
\end{proof}

We now proceed to evaluate $A_i(0)$.

\begin{lemma}\label{limit2}
Let $\v V$ be an $n+1$-tuple of integers with $\min(\v V)=0$ and let $N$ be a large
positive integer. Then the limit of $t^{-N|\v V|}$ times the coefficient of $\v x^{N\v V}$ in
$\eta_{\v U}$ as $t\to0$ equals
\[
\binom{N|\v V|}{NV_0,NV_1,\ldots,NV_n}\prod_{i=0}^n (NV_i)^{U_i}.
\]
Here $V_i^{U_i}$ should be read as $1$ when $U_i=V_i=0$.
\end{lemma}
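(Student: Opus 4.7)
The plan is to compute the coefficient of $t^{N|\v V|}\v x^{N\v V}$ in $\eta_{\v U}$ directly from the Laurent-series formula~\eqref{fullcoefficient}, extracting the lowest-order term in $t$ from each $\omega_{\v K}$ appearing in the definition of $\eta_{\v U}$, and then collapsing the resulting sum via the defining identity of Stirling numbers of the second kind.

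First I would apply~\eqref{fullcoefficient} with $\v U$ replaced by $\v K$: the coefficient of $\v x^{N\v V}$ in $\omega_{\v K}$ is
\[
|\v K|!\sum_{\ell\ge 0}t^{N|\v V|+(n+1)\ell}\binom{N|\v V|+(n+1)\ell}{|\v K|,\ldots,NV_i-K_i+\ell,\ldots}.
\]
For $N$ large and $|\v K|\le |\v U|<n$ fixed, the lowest power of $t$ that can occur is $N|\v V|$, and it occurs only at $\ell=0$; this term is present exactly when $NV_i\ge K_i$ for all $i$. For large $N$ this holds automatically when $V_i>0$, while $V_i=0$ forces $K_i=0$. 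In either case the $\ell=0$ contribution simplifies to
\[
|\v K|!\binom{N|\v V|}{|\v K|,NV_0-K_0,\ldots,NV_n-K_n}=\frac{(N|\v V|)!}{\prod_{i=0}^n(NV_i-K_i)!}.
\]

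Next I would substitute this into the definition $\eta_{\v U}=\sum_{\v K}\prod_i S(U_i,K_i)\,\omega_{\v K}$ and factor the sum over $\v K$ as a product over $i$. The coefficient of $t^{N|\v V|}\v x^{N\v V}$ in $\eta_{\v U}$ then equals
\[
(N|\v V|)!\prod_{i=0}^n\,\sum_{K_i=0}^{U_i}\frac{S(U_i,K_i)}{(NV_i-K_i)!}.
\]
Using $1/(M-K)!=[M]_K/M!$ together with the defining relation $M^U=\sum_{K=0}^{U}S(U,K)[M]_K$, one obtains the identity $\sum_{K=0}^{U}S(U,K)/(M-K)!=M^U/M!$. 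Specialising to $M=NV_i$, $U=U_i$ and multiplying through yields $(N|\v V|)!\prod_i(NV_i)^{U_i}/(NV_i)! = \binom{N|\v V|}{NV_0,\ldots,NV_n}\prod_i(NV_i)^{U_i}$, which is the claimed limit.

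The only subtlety lies in the cases $V_i=0$, but the Stirling identity above remains valid at $M=0$ under the standard conventions $1/(-K)!=0$ for $K\ge 1$, $0^0=1$, $S(0,0)=1$, $S(U,0)=0$ for $U>0$: both sides then equal $1$ if $U=0$ and $0$ if $U>0$, matching the parenthetical convention $V_i^{U_i}=1$ when $V_i=U_i=0$. No genuine obstacle is expected here; the argument is a direct bookkeeping computation once~\eqref{fullcoefficient} and the decoupling of the Stirling sums over the indices $i$ are in place.
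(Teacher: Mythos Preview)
Your proposal is correct and follows essentially the same approach as the paper: both extract the $\ell=0$ term from~\eqref{fullcoefficient} for each $\omega_{\v K}$, then collapse the Stirling-weighted sum defining $\eta_{\v U}$ via the identity $\sum_K S(U,K)[M]_K=M^U$. The only cosmetic difference is that the paper first simplifies the $\omega$-contribution to $\binom{N|\v V|}{NV_0,\ldots,NV_n}\prod_i[NV_i]_{K_i}$ before summing, whereas you write $[NV_i]_{K_i}=(NV_i)!/(NV_i-K_i)!$ and factor out the $(NV_i)!$ first; the computation and the handling of the $V_i=0$ edge cases are otherwise identical.
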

\begin{proof}  The coefficient of $\v x^{N\v V}$ in $\omega_\v U$ was computed 
in~\eqref{fullcoefficient}. After division by $t^{N|\v V|}$ this expression equals 
\begin{equation}\label{dividedcoefficient}
|\v U|!\sum_{\ell \ge 0}t^{(n+1)l}\binom{N|\v V|+(n+1)\ell}{|\v U|,\ldots,NV_i-U_i+\ell,\ldots}.
\end{equation}
We set $t=0$. In the summation the multinomial coefficient vanishes unless 
$\min(N\v V-\v U+\ell\v 1)\ge0$. In particular, when $V_i=0$ and $U_i>0$ for some $i$
this implies that $\ell>0$
and~\eqref{dividedcoefficient} vanishes at $t=0$.
In all other cases we can start with $\ell=0$ and get
\[
|\v U|!\binom{N|\v V|}{|\v U|,\ldots,NV_i-U_i,\ldots}=\binom{N|\v V|}{NV_0,NV_1,\ldots,NV_n}
\prod_{i=0}^n NV_i(NV_i-1)\cdots(NV_i-U_i+1).
\]
Notice that the latter product vanishes if $V_i=0$ and $U_i>0$ for some $i$, so this is the result for all cases. Using the expression for $\eta_\v U$ from Lemma~\ref{eta-basis} we find that the limit at $t=0$ of $t^{-N|\v V|}$ times the coefficient of $\v x^{N \v V}$ in $\eta_\v U$ equals
\[\bal
\sum_{k_0=0}^{U_0}\cdots\sum_{k_n=0}^{U_n}S(U_0,k_0)\cdots S(U_n,k_n) \binom{n |\v V|}{NV_0,\ldots,NV_n}\prod_{i=0}^n[N V_i]_{k_i} \\
= \binom{n |\v V|}{N V_0,\ldots,NV_n}\prod_{i=0}^n S(U_i,k_i)[N V_i]_{k_i} = \binom{N |\v V|}{N V_0,\ldots,NV_n}\prod_{i=0}^n (N V_i)^{U_i},
\eal\]
which is our desired expression.
\end{proof}

\begin{proposition}\label{lambda-U-via-gamma-p}
For any $n+1$-tuple of integers $\v V$  with $|\v V|\le n$ one has
\begin{equation}\label{expansion1}
\frac{\Gamma_p(p^{s+1}|\v V|)}{\prod_{i=0}^n\Gamma_p(p^{s+1}V_i)}\is
\sum_{\v U:|\v U|<n}\lambda_{\v U}(0)p^{s|\v U|}
\prod_{i=0}^n(V_i)^{U_i} \quad \mod{p^{(s+1)n}}
\end{equation}
for all integers $s \ge 0$.
\end{proposition}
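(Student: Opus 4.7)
The plan is to extract from the identity of Lemma~\ref{eta-basis}(iii) the $\v x^{p^s\v V}$-coefficient, pass to the leading term in $t$, and translate the resulting multinomial congruence into a $\Gamma_p$-congruence.

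First, I would take the $\v x^{p^s\v V}$-coefficient of both sides of $\cartier(1/f)\equiv\sum_{\v U}\lambda_\v U(t)\,\eta_\v U^\sigma \pmod{p^n\fil_n^\sigma}$ and divide by $t^{p^{s+1}|\v V|}$. The Cartier rule $[\v x^\v u]\cartier(\omega)=[\v x^{p\v u}]\omega$ together with the explicit series for $1/f$ shows that the left-hand side contributes, upon setting $t=0$, the multinomial $\binom{p^{s+1}|\v V|}{p^{s+1}V_0,\ldots,p^{s+1}V_n}$. For the right-hand side, Lemma~\ref{limit2} with $N=p^s$ (admissible for $s\ge 1$, since then $p^sV_i\ge p>n>U_i$ whenever $V_i>0$), combined with $\sigma:t\mapsto t^p$, yields $\lambda_\v U(0)\,p^{s|\v U|}\binom{p^s|\v V|}{p^sV_0,\ldots}\prod_iV_i^{U_i}$ for each $\v U$.

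Next, I would bound the $\fil_n^\sigma$-contribution. By the definition~\eqref{fil-k-def} and the formula for iterated Cartier on expansions, any element of $\fil_n^\sigma$ has its $\v x^\v u$-coefficient divisible by $p^{n\,\ord_p(\v u)}$. Since $|\v V|\le n<p$, every non-zero $V_i$ is a $p$-adic unit, so $\ord_p(p^s\v V)=s$ (the case $\v V=\v 0$ being trivial), whence an element of $p^n\fil_n^\sigma$ has its $\v x^{p^s\v V}$-coefficient divisible by $p^{(s+1)n}$. Combining the two computations I obtain
\begin{equation*}
\binom{p^{s+1}|\v V|}{p^{s+1}V_0,\ldots}\;\equiv\;\sum_{\v U:|\v U|<n}\lambda_\v U(0)\,\binom{p^s|\v V|}{p^sV_0,\ldots}\,p^{s|\v U|}\prod_{i=0}^nV_i^{U_i}\pmod{p^{(s+1)n}}.
\end{equation*}

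Finally, I would convert multinomials to $\Gamma_p$-values. Iterating $M!=M!_p\cdot p^{\floor{M/p}}\floor{M/p}!$ for $M=p^{s+1}V_i$ and $M=p^{s+1}|\v V|$ (whose topmost base-$p$ digits $V_i$ and $|\v V|$ are all strictly less than $p$) and applying the Morita identities $M!_p=(-1)^{M+1}\Gamma_p(M+1)$ and $\Gamma_p(pm+1)=-\Gamma_p(pm)$ gives, after the alternating signs cancel because $|\v V|=\sum_iV_i$,
\begin{equation*}
\frac{\binom{p^{s+1}|\v V|}{p^{s+1}V_0,\ldots}}{\binom{p^s|\v V|}{p^sV_0,\ldots}}\;=\;\frac{\Gamma_p(p^{s+1}|\v V|)}{\prod_{i=0}^n\Gamma_p(p^{s+1}V_i)}.
\end{equation*}
Since $|\v V|<p$ means there are no carries in the base-$p$ addition $V_0+\cdots+V_n$, Kummer's theorem guarantees that $\binom{p^s|\v V|}{p^sV_0,\ldots}$ is a $p$-adic unit, so dividing the preceding congruence by it produces the claim. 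The main obstacle I anticipate is the bookkeeping in this last step---keeping track of the signs from $\Gamma_p(pm+1)=-\Gamma_p(pm)$ and checking that the $p$-valuations match---but the hypothesis $p>n+1$ keeps every quantity below $p$ and so avoids any parasitic $p$-valuation loss.
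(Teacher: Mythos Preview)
Your proposal is correct and follows essentially the same approach as the paper: extract the $\v x^{p^s\v V}$-coefficient from~\eqref{cartier-on-f-mod-fil-n}, evaluate at $t=0$ via Lemma~\ref{limit2}, control the $\fil_n^\sigma$ error, and divide by the $p$-adic unit $\binom{p^s|\v V|}{p^sV_0,\ldots,p^sV_n}$. The only difference is that you spell out the multinomial-to-$\Gamma_p$ conversion and the Kummer-type unit argument in more detail than the paper, which simply asserts both facts.
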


\begin{proof} We shall take the coefficient of $\v x^{p^s \v V}$ in~\eqref{cartier-on-f-mod-fil-n}, divide on both sides by $t^{p^{s+1}|\v V|}$ and set $t=0$. In~\cite[\S3]{DCIII} we explain that the Cartier operator acts on formal expansions as \[
 \cartier \left(\sum_{\v u \in \Z_n} c_\v u \v x^\v u \right)=\sum_{\v u \in \Z_n} c_{p\v u} \v x^\v u
\]
and the expansion coefficient at $\v x^{p^s \v V}$ of an element of $\fil_n$ is divisible by $p^{sn}$. Using Lemma~\ref{limit2} with $N=p^{s+1}$ and $N=p^s$ we obtain
\[
\binom{p^{s+1}|\v V|}{p^{s+1}V_0,\ldots,p^{s+1}V_n}\is \sum_{\v U: |\v U|<n}\lambda_{\v U}(0)
\binom{p^s|\v V|}{p^sV_0,\ldots,p^sV_n}\prod_{i=0}^n (p^sV_i)^{U_i}\mod{p^{(s+1)n}}.
\]
Since $|\v V|\le n<p$, it follows that the multinomial coefficient $\binom{p^s|\v V|}{p^sV_0,\ldots,p^sV_n}$ is a $p$-adic unit. Dividing by this unit on both sides we obtain~\eqref{expansion1}. 
\end{proof}

\begin{proposition}
For any $\v V$ consider the Taylor series expansion
\[
\frac{\Gamma_p(x|\v V|)}{\prod_{i=0}^n\Gamma_p(x V_i)}= \sum_{k \ge 0} a_k(\v V) x^k.
\] 
Then, for any $\v V$ with $|\v V|\le n$, one has
\be\label{equations-for-lambda-U-at-0}
\sum_{\v U:|\v U|= k}\lambda_{\v U}(0) 
\prod_{i=0}^n(V_i)^{U_i} = p^k a_k(\v V) 
\ee
for $k=0,1,\ldots,n-1$.
\end{proposition}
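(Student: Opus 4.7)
The plan is to pair Proposition~\ref{lambda-U-via-gamma-p} with the Taylor expansion of $G(x):=\Gamma_p(x|\v V|)/\prod_{i}\Gamma_p(xV_i)$ evaluated at $x=p^{s+1}$, and then read off the claimed identity by a short $p$-adic valuation argument. Setting $c_k(\v V):=\sum_{|\v U|=k}\lambda_{\v U}(0)\prod_i V_i^{U_i}$, the congruence~\eqref{expansion1} rewrites as
\[
G(p^{s+1}) \;\is\; \sum_{k=0}^{n-1} c_k(\v V)\, p^{sk} \quad \mod{p^{(s+1)n}}
\]
for all sufficiently large $s$. On the other hand, substituting $x=p^{s+1}$ into the Taylor series gives
\[
G(p^{s+1}) \;=\; \sum_{k\ge 0} a_k(\v V)\, p^{(s+1)k},
\]
and for $s$ large the tail $\sum_{k\ge n}$ lies in $p^{(s+1)n}\Z_p$.

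Equating the two expressions modulo $p^{(s+1)n}$ and introducing $b_k:=p^{k}a_k(\v V)-c_k(\v V)\in\Q_p$, I obtain
\[
\sum_{k=0}^{n-1} p^{sk}\, b_k \;\is\; 0 \quad \mod{p^{(s+1)n}}
\]
for all sufficiently large $s$. If some $b_k\ne 0$, let $k_0$ be the smallest such index. Then for $s$ sufficiently large the $p$-adic valuation of the left-hand side equals $sk_0+\ord_p(b_{k_0})$ (the minimum is uniquely attained at $k=k_0$), and this quantity must be at least $sn+n$. Since $k_0\le n-1$, rearranging yields $(n-k_0)s \le \ord_p(b_{k_0})-n$, a bound that cannot hold for arbitrarily large $s$. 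Hence $b_k=0$ for each $k=0,\ldots,n-1$, which is precisely~\eqref{equations-for-lambda-U-at-0}.

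The only delicate point is the tail estimate in the first step: one needs a bound on $\ord_p(a_k(\v V))$ sufficient to guarantee $a_k(\v V)\,p^{(s+1)k}\in p^{(s+1)n}\Z_p$ for all $k\ge n$ once $s$ is large. This is supplied by the $p$-adic analyticity of $\Gamma_p$ on $p\Z_p$: using~\eqref{gamma-zeta-relation} together with the cancellation $|\v V|=\sum_i V_i$ that kills the linear term, $\log G(x)$ is a power series whose degree-$m$ coefficient lies in $m^{-1}\Z_p$, and exponentiating (which converges because $p>2$ and $\ord_p(p^{s+1})\ge 1>1/(p-1)$) the coefficients $a_k(\v V)$ have denominators dominated by $k!$. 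This is amply absorbed by $p^{(s+1)k}$ for $k\ge n$ and $s$ large, so no serious obstacle remains beyond this bookkeeping.
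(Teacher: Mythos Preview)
Your argument is correct and follows essentially the same route as the paper: substitute $x=p^{s+1}$ into the Taylor expansion of $\Gamma_p(x|\v V|)/\prod_i\Gamma_p(xV_i)$, compare with the congruence~\eqref{expansion1}, and extract the identities for $k=0,\ldots,n-1$. The paper phrases the final step as ``induction on $k$'' rather than your valuation/minimal-index argument, but these are equivalent. One cosmetic difference: the paper simply asserts that $a_k(\v V)\in\Z_p$ from the known properties of Morita's $\Gamma_p$, which makes the tail estimate immediate, whereas you argue for the weaker bound that the denominators of $a_k(\v V)$ are dominated by $k!$; either suffices.
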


\begin{proof}
From the properties of Morita's $p$-adic $\Gamma$-function it follows that $a_k(\v V)\in\Z_p$
for all $k\ge0$. Setting $x=p^{s+1}$ for any $s>0$ we get the $p$-adically converging series
\[
\frac{\Gamma_p(p^{s+1}|\v V|)}{\prod_{i=0}^n\Gamma_p(p^{s+1} V_i)}= \sum_{k \ge 0} a_k(\v V) p^kp^{sk}.
\] 
Consider this equality modulo $p^{(s+1)n}$ and compare with \eqref{expansion1}.
Our Proposition follows by letting $s\to\infty$ and induction on $k$. 
\end{proof}

\begin{proof}[Proof of Theorem~\ref{main1}.] By Proposition~\ref{cartier2frobenius}
there exists a Frobenius structure for the differential operator $L$ given by $\sA=\sum_{k=0}^{n-1}A_k(t) \theta^k$ with the coefficients $A_k(t) \in R \subset \Z_p\lb t \rb$. By Proposition~\ref{special-Frobenius-action} we have $\alpha_k = A_k(0)$ and by Lemma~\ref{eta-basis}(iii) we have $A_k(0)= p^{-k}(n+1)^{-k} \sum_{\v U: |\v U|=k} \lambda_\v U(0)$.

The latter sum could be computed easily if we could set $V_0=V_1=\cdots=V_n=1$ in 
\eqref{equations-for-lambda-U-at-0}. Unfortunately the latter equation only holds for $\v V$ with $|\v V|\le n$.
In particular $\min_iV_i=0$ for these vectors $\v V$. We use a small detour to arrive at our desired conclusion.

It turns out that $(-1)^{|\v V|}$ times the left-hand side of \eqref{equations-for-lambda-U-at-0} summed
over all $\v V\in\{0,1\}^{n+1}$ is also $0$. To see this consider $(-1)^{|\v V|}$ times the summand
$\v V^{\v U}:=\prod_jV_j^{U_j}$. Since $|\v U|\le n$ there exist $i$ such that $U_i=0$. To any 
$\v V\in\{0,1\}^{n+1}$ there exists $\tilde{\v V}$ obtained by replacing $V_i$ in $1-V_i$. Clearly
$|\v V|$ and $|\tilde{\v V}|$ differ by $1$ and $V_i^{U_i}=V_i^0=1$ independent of $V_i$. Hence the
contributions corresponding to $\v V$ and $\tilde{\v V}$ cancel. This proves that the alternating sum
of the left hand sides of \eqref{equations-for-lambda-U-at-0} is $0$. 

It also turns out that $(-1)^{|\v V|}$ times $a_k(\v V)$ summed over $\v V\in\{0,1\}^{n+1}$ is $0$. 
To see this consider the sum
\[
\sum_{\v V\in\{0,1\}^{n+1}}(-1)^{|\v V|}\frac{\Gamma_p(x|\v V|)}{\prod_{i=0}^n\Gamma_p(xV_i)}
=\sum_{j=0}^{n+1}(-1)^{j}\binom{n+1}{j}\frac{\Gamma_p(j x)}{\Gamma_p(x)^j}.
\]
The Taylor series expansion of this function is divisible by $x^{n+1}$. This is because for any power
series $F(x)=1+f_1 x + f_2 x^2 + \ldots$ one has 
\be\label{formal-series-identity}
\sum_{j=0}^{n+1} (-1)^j \binom{n+1}{j} \frac{F(j x)}{F(x)^j} \is 0 \; \mod {\sO(x^{n+1})}.
\ee
To prove this identity we introduce an auxiliary variable $y$ and observe that the coefficient of $x^k$ in 
$\sum_{j=0}^{n+1} (-1)^j \binom{n+1}{j} y^j F(j x)$ is equal to 
$f_k \sum_{j=0}^{n+1} (-1)^j \binom{n+1}{j} y^j j^k = (y \frac{d}{dy})^k(1-y)^{n+1}$. When $k<n+1$ this latter expression is divisible by $(1-y)^{n+1-k}$. Substituting $y=1/F(x)=1-f_1x+\ldots$ we have that $1-y$ is divisible by $x$ and hence the whole sum is divisible by $x^{k+n+1-k}=x^{n+1}$.

We have now seen two alternating series that add up to $0$. From \eqref{equations-for-lambda-U-at-0}
we see that for all $\v V$ with $|\v V|\le n$ we have equality of terms in these summations.
Hence we also have equality for the remaining term with $\v V=(1,1,\ldots,1)$. 
\end{proof}

\section{The hyperoctahedral case}\label{sec:proof-of-main2}
This section is devoted to the proof of Theorem~\ref{main2}. Recall that the hyperoctahedral family is given by the equation $1-t g(\v x)=0$ with
\[
g(\v x)=x_1+\frac{1}{x_1}+\cdots+x_n+\frac{1}{x_n}.
\]
Let $\sG \cong S_n \times (\Z/2\Z)^n$ be the finite group generated by the
permutations of the $x_i$ and the inversions $x_i\to x_i^{\pm1}$. The polynomial $g(\v x)$ is invariant under $\sG$.

We work with the polynomial $f(\v x)=1-t g(\v x)$ and use the construction from Section~\ref{sec:Frobenius-structure}. From \cite[Proposition 6.1]{IN} we have the following,
\begin{proposition}\label{hyperoctahedral-is-cyclic}
There exists 
$D_f(t)\in n!\Z[t]$ with $D_f(0)=n!$ such that over the ring
$\Z[t,1/D_f(t)]$ the module $M=(\sO_f^\circ)^{\sG}/d\sO_f$ satisfies property (i)
of Definition~\ref{cyclicMUM}. Moreover, there exists a differential operator $L\in \Z[t][\theta]$ of order $n$ 
with leading coefficient $D(t)=D_f(t)/n!$ and of MUM-type such that $L(1/f) \in d\sO_f$.
If $L$ is irreducible in $\Q(t)[\theta]$, then property (ii) of Definition~\ref{cyclicMUM} is
satisfied and $M$ is a cyclic $\theta$-module of MUM type.  
\end{proposition}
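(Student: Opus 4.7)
The plan is to follow a Griffiths--Dwork reduction procedure, adapted to the $\sG$-invariant setting of the hyperoctahedral family. I would begin by making the combinatorics explicit. The Newton polytope $\Delta$ of $g$ is the cross-polytope with vertices $\pm e_1,\ldots,\pm e_n$; it is reflexive with $\deg(\v u)=\sum_i|u_i|$, and $\sG=S_n\ltimes(\Z/2\Z)^n$ acts by permuting coordinates and flipping signs. The space of $\sG$-invariant admissible Laurent polynomials supported in $m\Delta$ is then finite-dimensional, with a basis indexed by $\sG$-orbits of lattice points in $m\Delta$.

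The computational heart of the proof is property~(i) of Definition~\ref{cyclicMUM}, which I would establish by induction on the pole order $m$. The Griffiths--Dwork identity
\[
m\,\frac{x_i(\partial_i f)h}{f^{m+1}}\;\equiv\;\frac{x_i\,\partial_i h}{f^m}\pmod{d\sO_f},
\]
combined with $x_i\partial_i f=-t(x_i-1/x_i)$, trades a monomial divisible by $x_i-1/x_i$ at pole order $m+1$ for a derivative at pole order $m$, at the cost of a factor $1/m$. The main step is to show that every $\sG$-invariant admissible polynomial $h$ supported in $m\Delta$ admits a decomposition
\[
h \;=\; t\sum_{i=1}^n (x_i-1/x_i)\,q_i \;+\; r,
\]
where $r$ is admissible and supported in $(m-1)\Delta$ and the tuple $(q_i)$ transforms so that symmetrizing over $\sG$ produces an invariant relation. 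This amounts to an ideal-membership argument modulo the Jacobian-type ideal generated by $\{x_i-1/x_i\}$, which becomes tractable once restricted to $\sG$-orbit sums.

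Tracking the denominators introduced at each reduction step produces the polynomial $D_f(t)$. The factors of $1/m$ accumulate to $1/n!$, while the $t$-dependent denominators arise from inverting transition matrices between orbit bases; all of these matrices are units at $t=0$ because at that point $f\equiv 1$. Together these force $D_f\in n!\Z[t]$ with $D_f(0)=n!$. Once property~(i) is in hand, applying it with $m=n$ to $\theta^n(1/f)$ yields the relation $\theta^n(1/f)\equiv\sum_{j=0}^{n-1}b_j(t)\theta^j(1/f)\pmod{d\sO_f}$ with $b_j\in\Z[t,1/D_f]$; clearing the denominator $D_f/n!$ produces the desired $L\in\Z[t][\theta]$ of order $n$ with leading coefficient $D(t)=D_f(t)/n!$. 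The MUM-type condition $a_j(0)=0$ follows from specialization at $t=0$, where $f\equiv 1$ and all derivatives $\theta^j(1/f)$ vanish identically for $j\ge 1$, forcing the constant terms of $a_j$ to vanish. Finally, assuming $L$ is irreducible in $\Q(t)[\theta]$, property~(ii) holds tautologically and $M=(\sO_f^\circ)^\sG/d\sO_f$ is a cyclic $\theta$-module of MUM type.

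The main obstacle I expect is the bookkeeping in the reduction step: as $m$ ranges up to $n$, one must consolidate all the $t$-dependent denominators introduced when inverting transition matrices among $\sG$-orbit bases into a single polynomial $D_f(t)$ of the claimed shape, independent of the input $h$. The $\sG$-symmetry is essential here, because without it the $n$-dimensional quotient on the right-hand side of~(i) would not even have the correct rank. Irreducibility of $L$ in $\Q(t)[\theta]$ is treated as a hypothesis, since it cannot be established in general by these techniques and is only verified computationally for small $n$.
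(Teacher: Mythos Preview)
The paper does not prove this proposition in situ; it imports the result from \cite[Proposition~6.1]{IN} and adds only the remark that clearing the denominator $D(t)=D_f(t)/n!$ from the monic operator constructed there gives $L\in\Z[t][\theta]$. Your Griffiths--Dwork reduction sketch is precisely the method of \cite{IN}, so the approaches coincide in substance.

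There is, however, one genuine gap in your outline. Your argument for the MUM condition does not work: at $t=0$ every $\theta^j(1/f)$ with $j\ge1$ vanishes, \emph{including} $\theta^n(1/f)$, so specializing the relation $L(1/f)\in d\sO_f$ to $t=0$ collapses to the single statement $a_n(0)\cdot 1\in d\sO_f|_{t=0}=\{0\}$, yielding only $a_n(0)=0$. Nothing is forced on $a_1(0),\ldots,a_{n-1}(0)$ by this crude specialization. The MUM property has to be extracted from the reduction procedure itself, by tracking the $t$-adic order through each Griffiths--Dwork step and verifying that the coefficients $b_j$ produced when reducing $\theta^n(1/f)$ all land in $tR$; this is part of the bookkeeping you correctly flag as the main obstacle, and it is carried out in \cite{IN}. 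A smaller quibble: the factor $n!$ in $D_f(0)=n!$ does not arise from ``accumulating $1/m$'' in the reduction identity, since those factorials are already absorbed into the normalization $(m-1)!\,h/f^m$ defining $\sO_f$; it comes instead from the $\sG$-symmetrization and the linear algebra over the orbit basis (compare the denominators $2^{-j}(n-j)!/n!$ that appear later in Lemma~\ref{eta-to-cyclic-basis-at-0}).
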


The differential operator in the statement of~\cite[Proposition 6.1]{IN} is monic with coefficients in $\Z[t,1/D_f(t)]$. Here we multiply it on the left by the polynomial $D(t)$. The fact that our $L$ has coefficients in $\Z[t]$ follows from the proof of~\cite[Proposition 6.1]{IN}, in the notation of which we have $L=\sum_{i=0}^n a_i(t) \theta^i$ and $D(t)=a_0(t)$. This $L$ is precisely the operator in Theorem~\ref{main2}.

We suppose from now on that the operator $L$ is irreducible and fix a prime number $p > n$.
Existence of the Frobenius structure $\sA=\sum_{j=0}^{n-1} A_j(t)\theta^j$ with coefficients in the ring $R =$ $p$-adic completion of $\Z[t,1/D(t)]$
 follows from Proposition~\ref{cartier2frobenius}
and the $n$-th Hasse--Witt
condition, which in the hyperoctahedral case is satisfied over $\Z_p\lb t \rb$ by \cite[Theorem 3.3]{IN}.
It remains to compute the $p$-adic values  $\alpha_i=A_i(0)$.

For $\v u \in \Z^n$ we denote 
\[
(\v x^\v u)^{\sG} = \# \sG^{-1} \sum_{h \in \sG} \v x^{h(\v u)}.
\]
We define elements
\[
\omega_\v u = \deg(\v u)! \frac{t^{\deg(\v u)}(\v x^{\v u})^{\sG}}{f(\v x)^{\deg(\v u)+1}}. 
\]

Note that $\omega_{\v u}=\omega_{h(\v u)}$ for any $h\in \sG$. 
The elements $\omega_\v u$ with $\v u\ge\v 0$
and $\deg(\v u)<n$ generate $(\sO_f^\circ)^{\sG}(n)$.
Here $\v u\ge\v 0$ means $u_i\ge0$ for $i=1,\ldots,n$. For such non-negative vectors we have $\deg(\v u)=|\v u|:=u_1+\cdots+u_n$.
Similarly to the simplicial case we define for all $\v u\ge\v0$, $\deg(\v u)<n$,
\[
\eta_\v u = \sum_{k_1=0}^{u_1} \ldots \sum_{k_n=0}^{u_n} S(u_1,k_1)\ldots S(u_n,k_n) \omega_{(k_1,\ldots,k_n)}.
\]  
Here $S(m,k)$ are the Stirling numbers of the second kind. 
These elements also generate $(\sO_f^\circ)^{\sG}(n)$.
Using the reduction procedure from the proof of \cite[Proposition 6.1]{IN}  one can write
\be\label{eta-to-cyclic-basis}
\eta_{\v u} = \sum_{j=0}^{|\v u|} \mu_{\v u,j}(t) \theta^j(1/f) \mod {d \sO_f}
\ee
with some $\mu_{\v u,j}\in\Z_p[t]$. 

\begin{lemma}\label{eta-to-cyclic-basis-at-0}
Let $\v u\ge\v 0$ and $0 \le j \le |\v u|<n$. Then
\[
\mu_{\v u,j}(0) = \begin{cases} \dfrac1{2^{j}}\dfrac{(n-j)!}{n!}\quad \text{ if }\; 
\v u = (\underbrace{1,\ldots,1}_{j},0,\ldots,0)\text{ or a permutation of it}\\
0, \text{ otherwise }.
\end{cases}
\]
\end{lemma}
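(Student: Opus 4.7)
The plan is to turn the reduction identity $\eta_\v u \equiv \sum_j \mu_{\v u,j}(t)\,\theta^j(1/f) \pmod{d\sO_f}$ into an explicit power-series identity. Since every element of $d\sO_f$ has zero Laurent constant term in $\v x$, applying $[\cdot]_{\v 0}$ yields
\[
[\eta_\v u]_{\v 0}(t) \;=\; \sum_{j=0}^{|\v u|} \mu_{\v u,j}(t)\,\theta^j(y_0(t)) \quad\text{in }\Z_p\pow t,
\]
where $y_0(t) = [1/f]_{\v 0}(t)$. By the assumed irreducibility of $L$ the functions $y_0,\theta y_0,\ldots,\theta^{n-1}y_0$ are $\Q(t)$-linearly independent, so this identity determines the $\mu_{\v u,j}(t)$ uniquely. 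A short symmetry argument gives $[\omega_\v k]_{\v 0}(t) = |\v k|!\,t^{|\v k|}\,[1/f^{|\v k|+1}]_{\v k}(t)$, and combined with the Stirling expansion of $\eta_\v u$ this turns the left-hand side into an explicit multinomial series.

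The algebraic engine is the combination of the identity $(tg)^j = (1-f)^j$ with $1/f^{m+1} = \theta(1/f^m)/m + 1/f^m$, which holds in $\sO_f$ (not merely modulo $d\sO_f$). Iterating produces
\[
\frac{t^j g^j}{f^{j+1}} \;=\; \frac{1}{j!}\,\theta^j(1/f) \;+\; \sum_{s<j} c_s^{(j)}\,\theta^s(1/f)
\]
with explicit rational constants $c_s^{(j)}$. Writing $y_i = x_i+1/x_i$ so that $g = \sum_i y_i$, I would decompose the $S_n$-invariant $(\v x^\v u)^\sG$ in the basis of products of elementary symmetric polynomials $\{e_\lambda(y)\}$. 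For $\v u$ a permutation of $(1^j,0^{n-j})$ one has the clean formula $(\v x^\v u)^\sG = \frac{j!(n-j)!}{2^j n!}\,e_j(y)$, while for other $\v u\ge\v 0$ with $|\v u|=j$ the expansion involves only $e_\lambda$ with $\lambda\ne(1^j)$.

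Using Newton's formula $e_j = g^j/j! + (\text{corrections in the power sums }p_k = \sum_i y_i^k,\ k\ge 2)$, the pure-$g^j$ part of $\omega_\v u$ contributes exactly $\frac{(n-j)!}{2^j n!}\theta^j(1/f)$ plus lower $\theta^s$ terms with constant coefficients. When $\v u$ is a permutation of $(1^j,0^{n-j})$ this already matches the predicted value of $\mu_{\v u,j}(0)$; the remaining lower-$\theta^s$ constant terms and all of the contributions coming from the Newton corrections are expected to be canceled by analogous contributions from the other $\omega_\v k$ that appear with Stirling coefficients in $\eta_\v u$. A sanity check for $\v u = (1,1,0)$ and $\v u = (2,0,0)$ in $n=3$ shows that the cancellation is very delicate: $\omega_{(2,0,0)}$ alone reduces to a combination whose $\theta$-coefficient at $t=0$ equals $-1/(2n)$, and it is precisely the Stirling-number expansion $\eta_{(2,0,0)} = \omega_{(1,0,0)} + \omega_{(2,0,0)}$ that produces the cancellation predicted by the lemma.

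The step I expect to be the main obstacle is reducing the Newton-correction terms $t^j p_\lambda/f^{j+1}$ (for partitions $\lambda\ne(1^j)$) modulo $d\sO_f$. I would iterate the Jacobian relation
\[
m\,t\,h\,(x_i - 1/x_i)/f^{m+1} \;\equiv\; -\,x_i\partial_i h/f^m \pmod{d\sO_f},
\]
coming from $x_i\partial_i g = x_i - 1/x_i$. Each application exchanges a factor of $(x_i-1/x_i)^2 = y_i^2 - 2$ for a power of $t$ times a lower-index power sum, so by induction on $\lambda$ the resulting $\theta^j$-coefficient of $t^j p_\lambda/f^{j+1}$ acquires an extra factor of $t$ and hence vanishes at $t=0$. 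Tracking this reduction together with the Stirling cancellation should yield the claimed values of $\mu_{\v u,j}(0)$.
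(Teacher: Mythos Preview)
Your starting point coincides with the paper's: taking the $\v x^{\v 0}$--coefficient of
$\eta_\v u \equiv \sum_j \mu_{\v u,j}(t)\,\theta^j(1/f)$ gives a scalar identity
$F_\v u(t)=\sum_j\mu_{\v u,j}(t)\,\theta^jF(t)$ from which the $\mu_{\v u,j}$ are determined.
After that the two arguments diverge completely. The paper never touches symmetric
functions, Newton's identities, or Jacobian reductions. It computes the explicit series
\[
F_\v u(t)=\sum_{\v m\ge 0} t^{2|\v m|}\frac{(2|\v m|)!}{(m_1!\cdots m_n!)^2}\prod_i m_i^{u_i}
\]
and exploits the factorization
$\dfrac{(2|\v m|)!}{(m_1!\cdots m_n!)^2}\,m_1
= 2|\v m|(2|\v m|-1)\dfrac{(2|\v m|-2)!}{((m_1-1)!\,m_2!\cdots m_n!)^2}$
to derive, when $\max_i u_i\ge 2$, the recursion
$F_\v u(t)=\theta(\theta-1)\,t^2\sum_{k=0}^{u_1-2}\binom{u_1-2}{k}
F_{(k,u_2,\ldots,u_n)}(t)$.
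The factor $t^2$ forces $\mu_{\v u,j}(0)=0$ by induction. For
$\v u=(1^\ell,0^{n-\ell})$ one simply expands
$\theta^\ell F = 2^\ell\sum_{\v m} t^{2|\v m|}\frac{(2|\v m|)!}{(m_1!\cdots m_n!)^2}
(m_1+\cdots+m_n)^\ell$ multinomially; the diagonal monomial
$m_1\cdots m_\ell$ gives $2^\ell\frac{n!}{(n-\ell)!}F_\v u$, and the remaining
monomials land in the case $\max_i u_i\ge2$ already handled. This is the whole proof.

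Your route has a concrete gap. The claim that for $\v u\ne(1^j,0^{n-j})$ the expansion of
$(\v x^\v u)^{\sG}$ in the basis $\{e_\lambda(y)\}$ avoids $e_{(1^j)}$ is false: for
$\v u=(2,0,\ldots,0)$ one has
$(\v x^\v u)^\sG=\frac{1}{2n}(p_2(y)-2n)=\frac{1}{2n}e_1^2-\frac{1}{n}e_2-1$, with a
nonzero $e_{(1^2)}$ term. Your own sanity check detects this (the $-1/(2n)$ you find is
exactly this $e_1^2$ contribution), but the proposal then relies on the Stirling
combination to repair it without giving a mechanism. More seriously, even for
$\v u=(1^j,0^{n-j})$, where $\eta_\v u=\omega_\v u$ has no Stirling correction at all, the
pure $g^j$ part produces nonzero constant $\theta^s$--coefficients for $s<j$ (e.g.\ for
$j=2$ one gets $-c/2$ in front of $\theta$). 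These must be cancelled entirely by the
Newton correction terms, and that cancellation is precisely the ``main obstacle'' you
leave open. It does work in the example, but showing it in general by Jacobian reduction
is a real computation that your outline does not supply. The paper's recursion sidesteps
all of this: it never separates leading from correction terms, so there is nothing to
cancel.
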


\begin{proof} 
Let us compute the constant term in the formal Laurent series expansion of $\omega_\v u$.
This expansion is given by
\[
\omega_\v u = |\v u|! \sum_{\ell=0}^\infty t^{\ell+|\v u|} 
\binom{\ell+|\v u|}{|\v u|} g(\v x)^\ell (\v x^{\v u})^{\sG}.
\]  
Since $g(\v x)$ is $\sG$-invariant, the constant term of 
$g(\v x)^\ell \v x^{h(\v u)}$ is the same as the constant term of 
$g(\v x)^\ell \v x^{\v u}$ for any $h\in \sG$. It vanishes unless $\ell=2k+|\v u|$
for an integer $k \ge 0$, and in the latter case we have
\[
\text{ const. term of } g(\v x)^\ell \v x^\v u = 
\sum_{k \in \Z_{\ge 0},\; k_1+\cdots+k_n=k}\frac{(2k+2|\v u|)!}{k_1!(k_1+u_1)!
\cdots k_n!(k_n+u_n)!}.
\]
Therefore,
\[\bal
\text{ const. term of } \omega_\v u =& \sum_{\v k \ge\v 0} t^{2|\v k|+2|\v u|}
\frac{(2k+2|\v u|)!}{k_1!(k_1+u_1)!\cdots k_n!(k_n+u_n)!} \\
=& \sum_{\v m \ge 0} t^{2|\v m|}\frac{(2|\v m|)!}{(m_1! \cdots m_n!)^2}
\prod_{i=1}^n [m_i]_{u_i},
\eal\]
where we denoted $\v k=(k_1,\ldots,k_n)$ and
$\v m=(k_1+u_1,\ldots,k_n+u_n)$. The summands where some $m_i<u_i$ vanish because the
respective $[m_i]_{u_i}=0$. For $\v u\ge\v 0$
we denote by $F_{\v u}(t)$ the constant term of
$\eta_\v u$. It follows from the above that 

\be\label{constant-terms-hyperoctahedral}
F_{\v u}(t) =  \sum_{\v m \ge 0} t^{2|\v m|} \frac{(2|\v m|)!}{(m_1! \cdots m_n!)^2}
\prod_{i=1}^n m_i^{u_i}.
\ee

Note that $\eta_{\v 0}=\omega_{\v 0}=1/f$. We denote $F_{\v 0}(t)$ by $F(t)$, 
this power series is a solution of the differential equation $L(F)=0$.
As the constant terms of elements in $d \sO_f$ vanish, we should have
\be\label{mu-via-F}
F_{\v u}(t) = \sum_{j=0}^{|\v u|} \mu_{\v u,j}(t) \, (\theta^j F)(t)
\ee
for every $\v u\ge\v 0$ with $|\v u|<n$. Moreover, since $L$ is irreducible,
the polynomials $\mu_{\v u,j}(t)$ are uniquely determined by the relations~\eqref{mu-via-F}. 

We will now show by induction in $|\v u|$ that all $\mu_{\v u,j}(t) \in \Z_p[t^2]$ and
$\mu_{\v u,j}(t)$ is divisible by $t^2$ unless $\max_iu_i=1$ and $j=|\v u|$. 
When $\v u = \v 0$ we trivially have $\mu_{\v 0,0}=1$. When $|\v u|=1$ the only possibility is that
$\v u$ is a coordinate permutation of $(1,0,\ldots)$. We check that
\[
(\theta F)(t) = 2 \sum_{\v m \ge\v 0} t^{2|\v m|} 
\frac{(2|\v m|)!}{(m_1! \cdots m_n!)^2} (m_1+\cdots+m_n) = 2n F_{(1,0,\ldots)}(t),
\]
so $\mu_{\v u,0}(t)=0$ and $\mu_{\v u,1}(t)=1/2n$. Now take a vector $\v u$ with $|\v u|>1$ and assume that the claim is proved for all vectors with smaller sum of coordinates. If 
$\max_iu_i>1$ we have, assuming $u_1=\max_iu_i$, 
\[
\frac{(2m)!}{(m_1! \cdots m_n!)^2} \prod_{i=1}^n m_i^{u_i} = 2m(2m-1) \frac{(2m-2)!}{((m_1-1)!m_2! \cdots m_n!)^2}m_1^{u_1-2}\prod_{i=2}^n m_i^{u_i}.
\]
Writing $m_1^{u_1-2}=\sum_{k=0}^{u_1-2}\binom{u_1-2}k(m_1-1)^k$, we see that
\[
F_\v u(t) = \theta(\theta-1)t^2 \sum_{k=0}^{u_1-2}\binom{u_1-2}{k} F_{(k,u_2,\ldots,u_n)}(t),
\]
and our claim for $\v u$ follows from the induction hypothesis. 

It remains to consider the case when $\v u$ is a coordinate permutation of 
$(\underbrace{1,\ldots,1}_{\ell},0,\ldots,0)$
where $\ell=|\v u|$. We then compute that
\[\bal
(\theta^\ell F)(t) &= 2^\ell \sum_{\v m \ge\v 0} t^{2|\v m|} \frac{(2|\v m|)!}{(m_1! \cdots m_n!)^2}
(m_1+\cdots+m_n)^\ell \\
&= 2^\ell \frac{n!}{(n-\ell)!} F_{\v u}(t) \\
&\qquad + \text{a $\Z$-linear combination of } F_{\v w}(t) \text{ with } |\v w|=\ell \text{ and } 
\max_iw_i>1.
\eal\]
We proved above that each such $F_{\v w}(t)$ is a linear combination of $\theta^j F$ with $j \le \ell$
and coefficients $\mu_{\v w,j}(t)$ divisible by $t^2$. It follows that $\mu_{\v u,j}(t)$ is divisible
by $t^2$ for all $j<\ell$ and $\mu_{\v u,\ell}(t)\in 2^{-\ell} (n-\ell)!/n! + t^2 \Z_p[t^2]$. 
This finishes the proof of our induction step and also proves the formula claimed in the lemma. 
\end{proof}

Recall that the $n$-th Hasse-Witt condition holds over the larger ring $\Z_p\lb t \rb$. As we explained in  Section~\ref{sec:Frobenius-structure}, the main result of~\cite{DCIII} implies that over this larger ring we have the decomposition $\hat \sO_f^\circ \cong \sO_f^\circ(n) \oplus \fil_n$. Moreover, by~\cite[Corollary 4.3]{DCIII} one has $\cartier(\sO_f^\circ) \subset \sO_{f^\sigma}^\circ(n) + p^n \fil_n^\sigma$. Since the Cartier operator commutes with monomial substitutions we also conclude that $\cartier$ maps $(\sO_f^\circ)^\sG$ into $(\sO_{f^\sigma}^\circ)^{\sG}(n) + p^n \fil_n^\sigma$. Since the elements $\eta_\v u^\sigma$ with $|\v u|<n$ generate $(\sO_{f^\sigma}^\circ)^{\sG}(n)$ we can write
\be\label{cartier-on-f-to-eta-basis-hyperoctahedral}
\cartier(1/f) = \sum_{\v u\ge\v 0: |\v u| < n}
\lambda_\v u(t) \; \eta_\v u^\sigma \; \mod {p^n \fil_n^{\sigma}}
\ee
with some coefficients $\lambda_\v u(t) \in \Z_p\lb t \rb$. Note that the $\lambda_{\v u}(t)$ are not necessarily uniquely determined. 
For our proofs this will not matter, we simply make a choice.

\begin{lemma}\label{lambda-via-gamma-expansion-lemma} For any 
$\v u\ge\v 0$ we denote $\ell(\v u)=\#\{i: u_i>0\}$, the size of the support of $\v u$. 
For every $0 \le m \le n$ and sufficiently large
integers $s$ one has
\be\label{lambda-via-gamma-mod-ps}
\frac{\Gamma_p(p^{s+1}m)}{\Gamma_p(p^{s+1})^{m}} 
\is \sum_{\v u\ge\v0: |\v u|< n} \lambda_\v u(0) 
\frac{\binom{m}{\ell(\v u)}}{2^{\ell(\v u)}\binom{n}{\ell(\v u)}} p^{s |\v u|}
\quad \mod {p^{(s+1)n}}.
\ee
\end{lemma}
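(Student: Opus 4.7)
The plan is to extract~\eqref{lambda-via-gamma-mod-ps} by taking the coefficient of a carefully chosen monomial $\v x^{p^s \v V}$ on both sides of~\eqref{cartier-on-f-to-eta-basis-hyperoctahedral} and isolating the leading $t$-power. I would take $\v V = (\underbrace{1,\ldots,1}_{m},0,\ldots,0)$, which has degree $m$ in the hyperoctahedral polytope. Since $\cartier$ acts on formal expansions by $c_\v u(t) \mapsto c_{p\v u}(t)$, the left-hand side produces $[\v x^{p^{s+1}\v V}](1/f)$; expanding $1/f = \sum_\ell t^\ell g^\ell$ one sees that the lowest-order term in $t$ occurs at $\ell = p^{s+1}m$ with coefficient $(p^{s+1}m)!/(p^{s+1}!)^m$. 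By definition~\eqref{fil-k-def}, any $\omega \in \fil_n^\sigma$ has $[\v x^{p^s\v V}]\omega$ divisible by $p^{sn}$, so the $p^n\fil_n^\sigma$ error on the right-hand side contributes divisibility by $p^{(s+1)n}$.

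The heart of the computation is $[\v x^{p^s\v V}]\eta_\v u^\sigma$ at leading order in $t$. Writing $\omega_\v k = \frac{|\v k|!}{|\sG|}\sum_{h\in\sG}t^{|\v k|}\v x^{h(\v k)}/f^{|\v k|+1}$ and expanding $1/f^{|\v k|+1}$ in $g$, a triangle-inequality argument shows that the $t^{p^s m}$ order of $[\v x^{p^s\v V}]\omega_\v k$ is attained exactly when $h(\v k)\ge 0$ with $\supp(h(\v k))\subset\{1,\ldots,m\}$. Counting such $h\in \sG = S_n\ltimes\{\pm1\}^n$ with $r = \ell(\v k)$ gives $m!(n-r)!2^{n-r}/(m-r)!$ elements; after dividing by $|\sG|=n!2^n$ the combinatorial prefactor $\binom{m}{r}/(2^r\binom{n}{r})$ emerges. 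Each valid $h$ contributes the same multinomial $(p^s m)!/\bigl(\prod_{j\in\supp\v k}(p^s-k_j)!\,(p^s!)^{m-r}\bigr)$.

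Assembling $\eta_\v u = \sum_\v k S(u_1,k_1)\cdots S(u_n,k_n)\omega_\v k$, the identities $S(0,k)=\delta_{k,0}$ and $S(u,0)=0$ for $u>0$ force $\supp(\v k)=\supp(\v u)$ in every nonvanishing summand, so $r = \ell(\v u)$ throughout. The Stirling identity $\sum_k S(u,k)[N]_k = N^u$ applied with $N=p^s$ collapses the inner sum to $(p^s)^{u_j}/p^s!$ at each coordinate $j\in\supp(\v u)$, yielding
\[
[t^{p^s m}][\v x^{p^s\v V}]\eta_\v u \= \frac{\binom{m}{\ell(\v u)}}{2^{\ell(\v u)}\binom{n}{\ell(\v u)}}\cdot\frac{(p^s m)!}{(p^s!)^m}\cdot p^{s|\v u|}.
\]
After the Frobenius twist $t\mapsto t^p$ this reappears as the coefficient of $t^{p^{s+1}m}$ in $[\v x^{p^s\v V}]\eta_\v u^\sigma$, so equating coefficients of $t^{p^{s+1}m}$ in~\eqref{cartier-on-f-to-eta-basis-hyperoctahedral} gives
\[
\frac{(p^{s+1}m)!}{(p^{s+1}!)^m} \is \frac{(p^s m)!}{(p^s!)^m}\sum_{\v u:|\v u|<n}\lambda_\v u(0)\frac{\binom{m}{\ell(\v u)}}{2^{\ell(\v u)}\binom{n}{\ell(\v u)}}p^{s|\v u|}\mod{p^{(s+1)n}}.
\]

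To finish, Legendre's formula shows that $(p^s m)!/(p^s!)^m$ is a $p$-adic unit, so I may divide both sides by it. Splitting $(p^{s+1}m)! = p^{p^s m}(p^s m)!(-1)^{p^{s+1}m}\Gamma_p(p^{s+1}m)$ via Morita's formula $\Gamma_p(N)=(-1)^N\prod_{0<j<N,\,p\nmid j}j$, and analogously for $(p^{s+1}!)^m$, identifies the LHS with $\Gamma_p(p^{s+1}m)/\Gamma_p(p^{s+1})^m$, producing~\eqref{lambda-via-gamma-mod-ps}. The main obstacle I anticipate is the $\sG$-combinatorics in the second paragraph: pinning down precisely which $h\in\sG$ contribute at the leading $t$-order, organizing the bookkeeping of forced signs on $\supp(\v k)$ versus free signs on its complement, and verifying that the orbit multiplicities collapse to the clean factor $\binom{m}{r}/(2^r\binom{n}{r})$. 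Once that combinatorial factor is in hand, the Stirling manipulation and the $\Gamma_p$ identification are routine.
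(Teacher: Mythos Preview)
Your plan is correct and follows essentially the same route as the paper: choose $\v V=(1,\ldots,1,0,\ldots,0)$ with $m$ ones, extract the coefficient of $\v x^{p^s\v V}$ in~\eqref{cartier-on-f-to-eta-basis-hyperoctahedral}, divide by $t^{p^{s+1}m}$ and set $t=0$, carry out the $\sG$-orbit count to obtain the factor $\binom{m}{\ell(\v u)}/(2^{\ell(\v u)}\binom{n}{\ell(\v u)})$, apply the Stirling identity $\sum_k S(u,k)[N]_k=N^u$ to pass from $\omega_\v k$ to $\eta_\v u$, and finish by dividing through by the $p$-adic unit $(p^s m)!/(p^s!)^m$. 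Your observation that $S(u,0)=0$ for $u\ge1$ forces $\supp(\v k)=\supp(\v u)$ in the Stirling sum is exactly the point that makes the combinatorial prefactor constant across the sum, and your explicit $\Gamma_p$ identification via Morita's formula spells out the final step that the paper leaves implicit.
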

Notice that the sum on the right gets its non-zero contributions only from 
terms with $\ell(\v u)\le m$.
\begin{proof} Take some integral vectors $\v u, \v v\ge\v 0$.
For a large integer $N$ we take the expansion coefficient at $\v x^{N \v v}$ in $\omega_\v u$,
which is denoted by $[\omega_\v u]_{\v x^{N \v v}}$. We divide this coefficient by $t^{N |\v v|}$
and evaluate at $t=0$. Consider the expansion 
\[
\omega_\v u = |\v u|! \sum_{k=0}^\infty t^{k+|\v u|} 
\binom{k+|\v u|}{|\v u|} g(\v x)^k (\v x^{\v u})^{\sG}
\]  
and look at the coefficient at $\v x^{N \v v}$ in $g(\v x)^k \v x^{h(\v u)}$ for some $h \in G$.
Write $N \v v = \v w + h(\v u)$ for a vector $\v w$ in the support of $g(\v x)^k$. 
The monomial $\v x^{\v w + h(\v u)}$ in $g(\v x)^k x^{h(\v u)}$ can contribute
to the value of $t^{-N |\v v|}[\omega_\v u]_{\v x^{N \v v}}$ at $t=0$ 
if and only if
$k+|\v u|=N|\v v|$. Together with $N \v v = \v w + h(\v u)$ this implies that 
$\v w,h(\v u)\ge\v 0$. 
We conclude that $h(\v u) = \sigma(\v u)$ for some permutation $\sigma \in S_n$.
Conversely, for any $\sigma \in S_n$ such that all coordinates of $\v w=N \v v - \sigma(\v u)$
are non-negative we should take the contribution from the monomial $\v x^\v w$ in $g(\v x)^k$ with
$k=N|\v v|-|\v u|$. Recall that $\sG=S_n \times (\Z/2\Z)^n$. An element of $(\Z/2\Z)^n$
either fixes $\v u$ or moves it outside of $\Z_{\ge0}^n$. 
The number of elements $\varepsilon \in (\Z/2\Z)^n$ such that $\varepsilon(\v u)=\v u$
equals $2^{n-\ell(\v u)}$, and with every permutation $\sigma$ as above we should
account for contributions from all $h \in \sG$ of the form $h=\sigma \varepsilon$. Therefore
\[\bal
t^{-N \v v} [\omega_\v u]_{\v x^{N \v v}} \Big|_{t=0} &= \frac1{\#\sG} \sum_{\sigma \in S_n} 
\frac{(N|\v v|)!}{(N|\v v|-|\v u|)!} \frac{(N|\v v|-|\v u|)!}{(N v_1 - u_{\sigma(1)})!
\cdot (N v_n - u_{\sigma(n)})!} 2^{n - \ell(\v u)}\\
&= \frac{2^{n - \ell(\v u)}}{n! \cdot 2^n} \frac{(N|\v v|)!}{(Nv_1)!\cdots (Nv_n)!} 
\sum_{\sigma \in S_n}  \prod_{i=1}^n [N v_i]_{u_{\sigma(i)}}.\\
\eal\]    
Here $[x]_k$ are the falling factorials, and we have $[N v_i]_{u_{\sigma(i)}}=0$ when $N v_i < u_{\sigma(i)}$.
Particularly, for
\[
\v v = (\underbrace{1,\ldots,1}_{m},0,\ldots,0)
\]
we get
\[\bal
t^{-N \v v} [\omega_\v u]_{\v x^{N \v v}} \Big|_{t=0} &= \begin{cases} 0, \quad \ell(\v u)>m \\
\frac{1}{2^{\ell(\v u)} n!} \frac{(Nm)!}{N!^m} \left(\prod_{i=1}^n [N]_{u_i}\right) \binom{m}{\ell(\v u)} \ell(\v u)! (n-\ell(\v u))!
\end{cases} \\
&= \frac{1}{2^{\ell(\v u)}} \frac{(Nm)!}{N!^m} \frac{\binom{m}{\ell(\v u)}}{\binom{n}{\ell(\v u)}}\left(\prod_{i=1}^n [N]_{u_i}\right)
\eal\]
Switching to $\eta_\v u$, we get $t^{-N \v v} [\eta_\v u]_{\v x^{N \v v}}|_{t=0} = 2^{-\ell(\v u)} \frac{(Nm)!}{N!^m}  N^{|\v u|} \binom{m}{\ell(\v u)}/\binom{n}{\ell(\v u)}$. We now take the coefficient of $\v x^{p^s \v v}$ in~\eqref{cartier-on-f-to-eta-basis-hyperoctahedral}, divide by $t^{p^{s+1}m}$ and evaluate at $t=0$. Recall that the Cartier operator acts on formal expansions by $\cartier(\sum_{\v u \in \Z^n} c_\v u(t) \v x^\v u) = \sum_{\v u \in \Z^n} c_{p \v u}(t) \v x^\v u$ and the coefficient at $\v x^{p^s \v v}$ of an element of $\fil_n^\sigma$ is divisible by $p^{sn}$. Therefore we obtain the congruence
\[
\frac{(p^{s+1}m)!}{(p^{s+1}!)^m} \is \frac{(p^{s}m)!}{(p^{s}!)^m} 
\sum_{\v u\ge\v 0: |\v u|< n} 
\lambda_\v u(0) \frac{\binom{m}{\ell(\v u)}}{2^{\ell(\v u)}\binom{n}{\ell(\v u)}} p^{s |\v u|} 
\quad \mod {p^{(s+1)n}}.
\] 
Our claim follows after division by the $p$-adic unit $(p^{s}m)!/(p^{s}!)^m$.
\end{proof}

\begin{corollary}\label{lambda-via-gamma-expansion}
With the same notations as in Lemma \ref{lambda-via-gamma-expansion-lemma} we have for $j=0,1,\ldots,n-1$,
\[
\text{ coeff. of } x^j \text { in } \frac{\Gamma_p(m x)}{\Gamma_p(x)^m} =  
p^{-j}\sum_{\v u\ge\v 0: |\v u| = j} 
\lambda_\v u(0) \frac{\binom{m}{\ell(\v u)}}{2^{\ell(\v u)}\binom{n}{\ell(\v u)}}.
\] 
\end{corollary}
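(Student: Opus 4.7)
My plan is to imitate the argument used in the simplicial case (compare the proposition immediately following Proposition~\ref{lambda-U-via-gamma-p}): this corollary simply extracts the individual Taylor coefficients of the left-hand side from the mod-$p^{(s+1)n}$ congruence of Lemma~\ref{lambda-via-gamma-expansion-lemma}.

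First I would write the Taylor expansion at $x=0$,
\[
\frac{\Gamma_p(mx)}{\Gamma_p(x)^m} \= \sum_{k\ge 0} b_k\, x^k ,
\]
noting that $b_k \in \Z_p$ by the standard integrality properties of Morita's $p$-adic $\Gamma$-function. Substituting $x = p^{s+1}$ for a large positive integer $s$ yields the $p$-adically convergent identity
\[
\frac{\Gamma_p(p^{s+1}m)}{\Gamma_p(p^{s+1})^m} \= \sum_{k\ge 0} b_k\, p^k\, p^{sk} .
\]

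Next I would set
\[
c_j := \sum_{\v u\ge\v 0,\, |\v u|=j} \lambda_{\v u}(0)\,\frac{\binom{m}{\ell(\v u)}}{2^{\ell(\v u)}\binom{n}{\ell(\v u)}}
\]
for the coefficient of $p^{sj}$ on the right-hand side of Lemma~\ref{lambda-via-gamma-expansion-lemma}. Since every term in the tail $\sum_{k\ge n} b_k p^k p^{sk}$ has $p$-adic valuation at least $(s+1)n$, the lemma reduces to
\[
\sum_{j=0}^{n-1}\bigl(b_j p^j - c_j\bigr)\, p^{sj} \;\is\; 0 \quad \mod{p^{(s+1)n}} ,
\]
valid for every sufficiently large $s$. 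The claim of the corollary is exactly the assertion $b_j p^j = c_j$ for each $0 \le j < n$.

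Finally I would peel off these equalities by induction on $j$. Letting $s\to\infty$ in the congruence kills every term with $j \ge 1$ and forces $b_0 = c_0$. Assuming $b_i p^i = c_i$ for all $i < j$, dividing the remaining congruence by $p^{sj}$ gives
\[
b_j p^j - c_j + \sum_{i=j+1}^{n-1}\bigl(b_i p^i - c_i\bigr)\, p^{s(i-j)} \;\is\; 0 \quad \mod{p^{s(n-j)+n}} ;
\]
since $j < n$ the modulus still tends to infinity with $s$, and letting $s \to \infty$ extracts $b_j p^j = c_j$. The only subtle point is tracking $p$-adic valuations through the induction, but this is purely mechanical bookkeeping rather than a substantive obstacle---the real content of the corollary is already carried by Lemma~\ref{lambda-via-gamma-expansion-lemma}.
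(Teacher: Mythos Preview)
Your proposal is correct and follows exactly the approach the paper uses: the paper's proof is a single sentence (``Since $s$ in~\eqref{lambda-via-gamma-mod-ps} can be arbitrarily large, the statement follows by induction on $j$''), and you have simply unpacked that induction explicitly, in the same way the paper does in the simplicial case after Proposition~\ref{lambda-U-via-gamma-p}.
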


\begin{proof}
Since $s$ in~\eqref{lambda-via-gamma-mod-ps} can be arbitrarily large, the statement follows 
by induction on $j$.
\end{proof}

\begin{proof}[Proof of Theorem~\ref{main2}]
By Proposition~\ref{cartier2frobenius}
there exists a Frobenius structure for the differential operator $L$ with $\sA=\sum_{j=0}^{n-1} A_j(t) \theta^j$, where the coefficients $A_j \in R$ come from the expression
\be\label{cartier-mod-exact-1}
\cartier(1/f) = \sum_{j=0}^{n-1} A_j(t) \theta^j(1/f^\sigma) \mod {d \hat\sO_{f^\sigma}}.
\ee
By Lemma~\ref{fil-n-are-derivatives} we have 
\[
\fil_n^{\sigma} \cap (\hat\sO_{f^{\sigma}}^\circ)^{\sG} \subset d\hat\sO_{f^{\sigma}}.
\]
This implies that the congruence ~\eqref{cartier-on-f-to-eta-basis-hyperoctahedral} also holds
modulo $d\hat\sO_{f^{\sigma}}$. Then, using \eqref{eta-to-cyclic-basis} we get
\[
\cartier(1/f)\is \sum_{|\v u|<n}\sum_{j=0}^{n-1}\lambda_{\v u}(t)
\mu_{\v u,j}(t^p)p^{-j}\theta^j(1/f^\sigma)\mod{d\hat\sO_{f^\sigma}}.
\]
Hence the coefficients $A_j(t)$ from \eqref{cartier-mod-exact-1} read
\[
A_j(t)=p^{-j}\sum_{|\v u|<n}\lambda_{\v u}(t)\mu_{\v u,j}(t^p).
\]
Set $t=0$ and use Lemma~\ref{eta-to-cyclic-basis-at-0}. We obtain
\be\label{lambda-j-sum}
A_j(0) = p^{-j}\dfrac1{2^{j}}\dfrac{(n-j)!}{n!}
\sum_{|\v u|=\ell(\v u)=j}\lambda_{\v u}(0)
\ee
Multiply the equality in Corollary \ref{lambda-via-gamma-expansion} by $(-1)^{j-m}\binom{j}{m}$
and sum over $m=0,\ldots,j$. It is straightforward to verify that
\[
\sum_{m=0}^j(-1)^{j-m}\binom{j}{m}\binom{m}{\ell(\v u)}
\]
is $1$ if $j=\ell(\v u)$ and $0$ for all other $j$. Hence we obtain
\[
\text{ coeff. of } x^j \text { in }\sum_{m=0}^j(-1)^{j-m}\binom{j}{m}
\frac{\Gamma_p(m x)}{\Gamma_p(x)^m}
=p^{-j}\sum_{|\v u| = \ell(\v u)=j} 
\frac{1}{2^{j}\binom{n}{j}}\lambda_\v u(0) .
\]
Using \eqref{lambda-j-sum} we see that the last sum equals $j!A_j(0)$. Our Theorem now follows from the following lemma.
\end{proof}

\begin{lemma}
For any series $F(x)=1+a_1 x + a_2 x^2+\ldots$ and $j \ge 1$ one has
\begin{equation}\label{simp}
\frac1{j!}\sum_{m=0}^j(-1)^{j-m}\binom{j}{m}
\frac{F(m x)}{F(x)^m} = c x^j + O(x^{j+1})
\end{equation}
where $c$ is the coefficient of $x^j$ in $F(x)e^{- a_1 x}$.    
\end{lemma}
\begin{proof}
One can rewrite the right-hand side in~\eqref{simp} as 
\[\bal
\sum_{k \ge 0} a_k x^k & \left(\frac1{j!}\sum_{m=0}^j(-1)^{j-m}  \binom{j}{m}  \frac{m^k}{F(x)^m} \right) \\
&= \sum_{k \ge 0} a_k x^k \frac{(-1)^j}{j!} \left( (Y \frac{d}{dY})^k (1-Y)^j \right)\Big|_{Y=1/F(x)=1-a_1 x + \ldots}.
\eal\]
For $k \le j$ the expression $(Y \frac{d}{dY})^k (1-Y)^j$ is a linear combination of terms $(1-Y)^{j-i}Y^i$ with $i=0,\ldots,k$, and after substitution we have $(1-Y)^{j-i}Y^i\Big|_{Y=1/F(x)} = O(x^{j-i})$. Thus the term with $i=k$ gives the lowest degree contribution. Since this term enters the linear combination with the coefficient $(-1)^k\frac{j!}{(j-k)!}$ and $1-Y=a_1 x+O(x^2)$, and we find that our claim is true with
\[
c \; = \; \sum_{k=0}^{j} a_k  \frac{(-1)^j}{j!} (-1)^k\frac{j!}{(j-k)!} a_1^{j-k}  = \sum_{k=0}^{j} a_k  \frac{(-a_1)^{j-k}}{(j-k)!} = \text{ coeff. of } x^j \text{ in } F(x)e^{- a_1 x}.
\]
\end{proof}


\begin{thebibliography}{xx}
\bibitem{AESZ10} G. Almkvist, C. van Enckevort, D. van Straten, W. Zudilin, 
\emph{Tables of Calabi-Yau equations}, arXiv:math/0507430.

\bibitem{DCI} F. Beukers, M. Vlasenko, \emph{Dwork crystals I}, Int. Math. Res. Notices, 2021 (2021),
8807--8844 

\bibitem{DCII} F. Beukers, M. Vlasenko, \emph{Dwork crystals II}, Int. Math. Res. Notices,
2021 (2021), 4427--4444 

\bibitem{DCIII} F. Beukers, M. Vlasenko, \emph{Dwork crystals III: Excellent Frobenius lifts towards super
congruences}, Int. Math. Res. Notices, Volume 2023, Issue 23, 20433--20483

\bibitem{IN} F. Beukers, M. Vlasenko, \emph{On $p$-integrality of instanton numbers},
Pure and applied Mathematics Quarterly 19 (2023), 7--44

\bibitem{COS21} Ph. Candelas, X. de la Ossa, D. van Straten, \emph{Local Zeta Functions
from Calabi--Yau Differential Equations}, arXiv:2104.07816 [hep-th]

\bibitem{Dwork89} B.Dwork, \emph{On the uniqueness of Frobenius operator on differential equations},
Advanced Studies in Pure Mathematics 17, 1989 (Algebraic Number Theory - in honor of K. Iwasawa), 89--96

\bibitem{Katz} N. Katz, \emph{Internal reconstruction of unit-root F-crystals via expansion coefficients}. With an appendix by L. Illusie, Annales scientifiques de l’\'E.N.S 18 (1985), 245–285

\bibitem{Ked19} Kedlaya, Kiran S. Frobenius structures on hypergeometric equations. Arithmetic, geometry, cryptography, and coding theory 2021, 133–158, Contemp. Math., 779, Amer. Math. Soc., (Providence), RI, (2022), 	

\bibitem{Kedlaya} K. S. Kedlaya, \emph{$p$-adic Differential Equations}, Cambridge studies in
advanced mathemetics 125, Cambridge 2010.

\bibitem{Sch} W.H. Schikhof, \emph{Ultrametric calculus. An introduction to $p$-adic analysis,} Cambridge studies in advanced mathematics 4, Cambridge University Press 1984.

\bibitem{Sh09} I. Shapiro, \emph{Frobenius map for quintic threefolds},  Int. Math. Res. Not. IMRN 2009, no. 13, 2519--2545

\bibitem{Sh12} I. Shapiro, \emph{Frobenius map and the p-adic Gamma function}, J. Number Theory 132 (2012), no. 8, 1770--1779 

\bibitem{DucoCreswick} D. van Straten, \emph{CY-Operators and L-Functions}, 2017 MATRIX Annals, Springer, pp. 491--503

\bibitem{DucoCYOperators} D. van Straten, \emph{Calabi--Yau Operators}, in \emph{Uniformization, Riemann--Hilbert correspondence, Calabi-Yau manifolds \& Picard-Fuchs equations}, pp. 401--451, Adv. Lect. Math. (ALM), 42, Int. Press, Somerville, MA, 2018.

\end{thebibliography}
\end{document}